\documentclass[12 pt, leqno, letterpaper]{article}
\usepackage{amssymb,amsmath,amsfonts,amsthm}
\usepackage{enumerate}


\usepackage{pstricks,pst-node}
\usepackage{pdflscape}
\usepackage[none]{hyphenat}
\addtolength{\textheight}{1in}
\DeclareMathAlphabet{\chan}{T1}{pzc}{mb}{it}







\newcommand{\mb}{\mathbb}
\newcommand{\mc}{\mathcal}

\newcommand{\bigast}{\text{\large\rm *}}


\newtheorem{thm}{Theorem}

\newenvironment{rem}{\begin{trivlist} \item[] {\bf Remark.}}{\hspace*{0pt}\end{trivlist}}
\newenvironment{pbm}[1][]{\begin{trivlist} \item[] {\bf Problem#1.}}{\hspace*{0pt}\end{trivlist}}

\newsavebox{\Theoremtype}
\newsavebox{\Theoremlabel}

\newtheoremstyle{break}
{\topsep}
{\topsep}
{\it}
{}
{}
{}
{ }
{\thmname{\textbf{#1}}\thmnumber{ \textbf{#2.}}}

\theoremstyle{break}

\newtheoremstyle{ref}
{\topsep}	
{\topsep}	
{\it}
{}
{}
{}
{ }
{\thmname{{\bfseries#1}}\thmnumber{ \bfseries#2\thmnote{\rm #3}\bfseries .}}

\theoremstyle{ref}
\newtheorem{lem}[thm]{Lemma}

\newtheorem{cor}[thm]{Corollary}

\newtheoremstyle{nnref}
{\topsep}	
{\topsep}	
{\rm}
{}
{}
{}
{ }
{\thmname{\textbf{#1}\thmnote{\textrm{ #3}}\textbf{.}}}

\theoremstyle{nnref}
\newtheorem{defn}{Definition}

\newcommand{\Kechrisref}{\cite[\!I.7.8]{Kechris1994}}
\newcommand{\Engelkingref}{\cite[4.4J]{Engelking1989}}

\newcommand{\Engelkingrefi}{\cite[3.6.6]{Engelking1989}}
\newcommand{\Kechrisi}{\cite[7.10]{Kechris1994}}
\newcommand{\set}[1]{\left\{#1\right\}}

\begin{document}
\sloppy
\title{Lindel\"of spaces which are indestructible, productive, or $D$}
\author{Leandro F. Aurichi\makebox[0cm][l]{$^1$}~~and Franklin D. Tall\makebox[0cm][l]{$^2$}}

\footnotetext[1]{Research supported by Conselho Nacional de Desenvolvimento Cient\'ifico e Tecnol\'ogico and Coordena\c c\~ao de Aperfei\c coamento de Pessoal de N\'ivel Superior (Brazil). This research was undertaken while the first author was visiting the Department of Mathematics of the University of Toronto. He thanks the Department and especially the members of the Set Theory Seminar for their hospitality.}
\footnotetext[2]{Research supported by grant A-7354 of the Natural Sciences and Engineering Research Council of Canada.\vspace*{1pt}}
\date{\today}
\maketitle

\begin{abstract}
We discuss relationships in Lindel\"of spaces among the properties ``indestructible'', ``productive'', ``$D$'', and related properties.
\end{abstract}

\renewcommand{\thefootnote}{}
\footnote
{\parbox[1.8em]{\linewidth}{$(2010)$ Mathematics Subject Classification. Primary 54D20, 54D99, 03E35; Secondary 54G20.}\vspace*{3pt}}
\renewcommand{\thefootnote}{}
\footnote
{\parbox[1.8em]{\linewidth}{Key words and phrases: Lindel\"of, indestructible, countably closed forcing, productively Lindel\"of, $D$, Alster, Menger, Hurewicz.}}

\section{Introduction}

The question of what additional assumptions ensure that the product of two Lindel\"of spaces is Lindel\"of is natural and well-studied. See e.g., \cite{Michael1963}, \cite{Michael1971}, \cite{Alster1987}, \cite{Alster1988}, \cite{Przymusinski1980}, \cite{Przymusinski1984}. The question of which topological properties are preserved by which kinds of forcing is also a natural one.  See e.g., \cite{Dow1990}, \cite{Tall1995}, \cite{Grunberg1998}, \cite{Iwasa2007}, \cite{TallProblems}, etc.  The question of whether a Lindel\"of space remains Lindel\"of after countably closed forcing is particularly interesting because of its connection with the classic problem of whether Lindel\"of spaces with points $G_\delta$ consistently have cardinality $\leq2^{\aleph_0}$ \cite{Tall1995}.  We need some definitions.

\begin{defn}
A space is \textit{indestructibly Lindel\"of} if it is Lindel\"of in every countably closed forcing extension.

Note that indestructibly Lindel\"of spaces are Lindel\"of.
\end{defn}

$D$-spaces were introduced in \cite{vanDouwen1979}.

\begin{defn}
A space $X$  is $D$ if for every \textit{neighbourhood assignment} $\{V_x\}_{x\in X}$, i.e. each $V_x$ is an open set containing $x$, there is a closed discrete $Y\subseteq X$ such that $\{V_x\}_{x\in Y}$ covers $X$.  $Y$ is called a \textit{kernel} of the neighbourhood assignment.
\end{defn}

The question raised in \cite{vanDouwen1979} of whether every Lindel\"of space is a $D$-space has been surveyed in~\cite{Eisworth2007} and \cite{Gruenhage2009}. It has recently been the subject of much research.  In \cite{Aurichi}, the first author established many connections between the $D$ property, topological games, and selection properties.  In this paper, we examine Lindel\"of indestructibility and connections of it, selection principles, and the $D$ property with preservation of Lindel\"ofness under products.

\begin{defn}[\cite{Barr2000}]
A space $X$ is \textit{productively Lindel\"of} if $X \times Y$ is Lindel\"of for any Lindel\"of space $Y$.
\end{defn}

\begin{defn}
A space is \textit{indestructibly productively Lindel\"of} if it is productively Lindel\"of in any extension by countably closed forcing.
\end{defn}

\begin{defn}[\cite{Alster1988}, \cite{Barr2000}]
A space is \textit{Alster} if every cover by $G_\delta$ sets that covers each compact set finitely includes a countable subcover.
\end{defn}

\begin{defn}
A space is \textit{indestructibly $D$} if it remains $D$ after countably closed forcing.
\end{defn}

We shall deal with three selection principles in this paper.  These principles have a variety of equivalent definitions --- see e.g.~\cite{Scheepers}.  Here are the first two.  The third -- \textit{Hurewicz} -- is defined in Section 4.

\begin{defn}
A space is \textit{Menger} if for each sequence $\{\mc{U}_n\}_{n<\omega}$ of open covers, such that each finite union of elements of $\mc{U}_n$ is a member of $\mc{U}_n$, there are $U_n\in\mc{U}_n$, $n < \omega$, such that $\{U_n : n<\omega\}$ is an open cover.  A space is \textit{Rothberger} if for each sequence $\{\mc{U}_n\}_{n<\omega}$ of open covers, there are $U_n \in \mc{U}_n$, such that $\{U_n : n<\omega\}$ is an open cover.
\end{defn}

Clearly every Rothberger space is Menger.
Previously known results include:
\begin{lem}[~\cite{Scheepers}]
Every Rothberger space is indestructibly Lindel\"of.
\end{lem}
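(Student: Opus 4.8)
The plan is to argue by contradiction, reducing indestructibility to a single use of the Rothberger selection principle in its game formulation. I adopt the usual convention that in a countably closed extension $V[G]$ the space $X$ keeps its ground-model points and carries the topology generated by the ground-model basis $\mc B$. Suppose toward a contradiction that $X$ is Rothberger but not indestructibly Lindel\"of, so that for some countably closed $\mb P$, condition $p\in\mb P$, and name $\dot{\mc U}$ we have $p\Vdash$ ``$\dot{\mc U}$ is an open cover of $X$ with no countable subcover.''

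First I would reduce to the case where $\dot{\mc U}$ consists of ground-model basic open sets. In $V[G]$ every member of $\mc U$ is a union of elements of $\mc B$, so the family of all $B\in\mc B$ contained in some member of $\mc U$ is again an open cover of $X$; a countable subcover of it yields a countable subcover of $\mc U$ by selecting, for each of the countably many basic sets, one member of $\mc U$ containing it. Hence I may assume $p\Vdash\dot{\mc U}\subseteq\check{\mc B}$.

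The core of the argument is to build, inside $V$, a strategy $\sigma$ for player ONE in the Rothberger game $G_1(\mc O,\mc O)$ on $X$, and then invoke the theorem of Pawlikowski that a Rothberger space admits no winning strategy for ONE. For $q\le p$ set
\[
\mc V_q=\set{B\in\mc B:(\exists r\le q)\ r\Vdash\check B\in\dot{\mc U}}.
\]
Since $q\Vdash$ ``$\dot{\mc U}$ covers $X$'', for each $x\in X$ some $r\le q$ forces a basic set containing $x$ into $\dot{\mc U}$, so $\mc V_q$ is a genuine open cover of $X$ in $V$. Now let ONE open with $\mc V_p$; whenever TWO has responded with $B_0,\dots,B_{n-1}$, ONE will have used a fixed choice function to extract a decreasing chain $p=p_0\ge p_1\ge\cdots\ge p_n$ with $p_{k+1}\Vdash\check B_k\in\dot{\mc U}$ (possible because $B_k\in\mc V_{p_k}$), and ONE then plays $\mc V_{p_n}$. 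This is a legitimate strategy $\sigma$ for ONE, definable in $V$ from $\dot{\mc U}$, $\mb P$, and the choice function.

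Because $X$ is Rothberger, $\sigma$ is not winning, so there is a $\sigma$-play $B_0,B_1,\dots$ won by TWO, i.e.\ $\set{B_n:n<\omega}$ covers $X$; alongside it sits a decreasing $\omega$-chain $p_0\ge p_1\ge\cdots$ with $p_{n+1}\Vdash\check B_n\in\dot{\mc U}$. Here countable closure of $\mb P$ supplies a lower bound $q\le p_n$ for every $n$; then $q\le p$ and $q\Vdash$ ``$\set{B_n:n<\omega}$ is a countable subfamily of $\dot{\mc U}$ covering $X$'', contradicting the choice of $p$. I expect the main obstacle to be the first reduction together with the bookkeeping that keeps the conditions $p_n$ decreasing along each branch of $\sigma$: once the covers $\mc V_q$ are seen to work, threading a single decreasing chain so that countable closure applies at the end is precisely the point at which the Rothberger \emph{game}, rather than the bare selection principle, is required.
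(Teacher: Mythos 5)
The paper gives no proof of this lemma---it is quoted from the cited source [Scheepers] (Scheepers--Tall), and the argument there is exactly the one you give: refine the forced cover to ground-model basic open sets, turn a name for a bad cover into a strategy for ONE in $G_1(\mc O,\mc O)$ via the covers $\mc V_q$, defeat it using Pawlikowski's theorem that Rothberger is equivalent to ONE having no winning strategy, and close off the resulting decreasing chain of conditions by countable closure. Your write-up is correct and is essentially that same proof; the only point worth flagging is that you are relying on Pawlikowski's game characterization in the generality of arbitrary topological spaces (not just sets of reals), which is precisely the form the cited paper establishes and uses.
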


\begin{lem}[~\cite{Alster1988}]\label{lem1}
Every Alster space is productively Lindel\"of; $CH$ implies every productively Lindel\"of $T_3$ space of weight $\leq\aleph_1$ is Alster. Alster metrizable
spaces are $\sigma$-compact.
\end{lem}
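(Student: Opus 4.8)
The plan is to treat the three assertions separately, since they rest on quite different ideas. For the first — that every Alster space $X$ is productively Lindel\"of — I would fix a Lindel\"of space $Y$ and an open cover $\mathcal{W}$ of $X\times Y$, and reduce at once to the case where every member of $\mathcal{W}$ is a basic box $U\times V$ with $U$ open in $X$ and $V$ open in $Y$. The goal is to manufacture from $\mathcal{W}$ a cover of $X$ by $G_\delta$ sets that covers each compact set finitely, so that the Alster property applies. The crucial device is to index these $G_\delta$ sets by the \emph{compact subsets} of $X$ rather than by points.

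Concretely, for each compact $K\subseteq X$ I would run a tube--lemma argument in the $Y$-direction: for each $y\in Y$ the slice $K\times\{y\}$ is compact, so finitely many boxes $\{U_\alpha\times V_\alpha:\alpha\in F_y\}$ cover it, with $K\subseteq\bigcup_{\alpha\in F_y}U_\alpha$ and $y\in V^{(y)}:=\bigcap_{\alpha\in F_y}V_\alpha$. Since $Y$ is Lindel\"of, countably many $V^{(y_n)}$ already cover $Y$; put $A_K=\bigcup_n F_{y_n}$ (countable) and
\[
G_K=\bigcap_n\Big(\bigcup_{\alpha\in F_{y_n}}U_\alpha\Big),
\]
a genuine $G_\delta$ containing $K$. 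A short check shows $G_K\times Y\subseteq\bigcup_{\alpha\in A_K}(U_\alpha\times V_\alpha)$: if $x\in G_K$ and $y\in Y$, pick $n$ with $y\in V^{(y_n)}$ and then $\alpha\in F_{y_n}$ with $x\in U_\alpha$; since $y\in V^{(y_n)}\subseteq V_\alpha$, we get $(x,y)\in U_\alpha\times V_\alpha$. Now $\{G_K:K\subseteq X\text{ compact}\}$ is a $G_\delta$ cover of $X$ that covers each compact set finitely --- indeed every compact $L$ lies in the single member $G_L$ --- so the Alster property yields a countable subfamily $\{G_{K_m}:m<\omega\}$ covering $X$, and then $\{U_\alpha\times V_\alpha:\alpha\in\bigcup_m A_{K_m}\}$ is a countable subcover of $\mathcal{W}$. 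The step I expect to be the real obstacle is exactly the one this indexing resolves: a naive $G_\delta$ cover built pointwise need not cover compacta finitely (a $G_\delta$ set is not open, so compactness of $K$ gives no finite subcover), and the whole argument hinges on producing $G_\delta$ sets that contain entire compact sets.

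For the converse half --- that under $CH$ a productively Lindel\"of $T_3$ space $X$ of weight $\leq\aleph_1$ is Alster --- I would argue by contradiction from a $G_\delta$ cover $\mathcal{G}$ that covers compacta finitely yet has no countable subcover. Using the weight bound together with $CH$ (so that one may work with only $\aleph_1$ relevant $G_\delta$ sets, as $\aleph_1^{\aleph_0}=\aleph_1$) and the Lindel\"ofness of $X$, I would reduce to a bad cover $\{G_\alpha:\alpha<\omega_1\}$ of size $\aleph_1$. From it I would build a Lindel\"of space $Y$ of weight $\leq\aleph_1$ encoding the cover --- a Lindel\"ofication-type space on $\omega_1$ --- together with an open cover of $X\times Y$, assembled from the open sets defining the $G_\alpha$ and the neighbourhoods of $Y$, that has no countable subcover; the compacta-finite hypothesis is what forces any would-be countable subcover of $X\times Y$ to project to a countable subcover of $\mathcal{G}$, which is impossible. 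This contradicts productive Lindel\"ofness. The main obstacle here is verifying that $Y$ is Lindel\"of, which is where $CH$ enters: one closes off along an $\omega_1$-chain of countable elementary submodels (or diagonalizes directly), using $CH$ to capture countable subsets at countable stages.

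The last assertion is the quickest. If $X$ is metrizable and Alster then, since every open cover is trivially a $G_\delta$ cover that covers compacta finitely, $X$ is Lindel\"of. More to the point, in a metrizable space every compact set is closed and hence a $G_\delta$, so the family of \emph{all} compact subsets of $X$ is itself a cover by $G_\delta$ sets, and it covers each compact set finitely because every compact $L$ is a single member of the family. The Alster property then produces a countable subcover $\{K_n:n<\omega\}$, whence $X=\bigcup_n K_n$ is $\sigma$-compact. Here there is essentially no obstacle beyond the observation that metrizability makes compact sets $G_\delta$, which is precisely what lets the compact sets themselves serve as an admissible Alster cover.
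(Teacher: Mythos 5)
The paper offers no proof of this lemma at all --- it is quoted from Alster's 1988 paper --- so your attempt has to stand on its own. Your arguments for the first and third assertions do: the compact-indexed $G_\delta$ sets $G_K$ manufactured from the tube lemma plus Lindel\"ofness of $Y$ are exactly the right device for showing Alster implies productively Lindel\"of (and your verification that $G_K\times Y$ is covered by the countable family indexed by $A_K$ is complete), and covering a metrizable Alster space by the family of \emph{all} of its compact subsets --- which are $G_\delta$ because closed sets in metric spaces are $G_\delta$, and which cover each compact set by a single member --- immediately yields $\sigma$-compactness. Both are the standard arguments and are correct as written.

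The CH direction, however, is where the real content of the lemma lies, and there your proposal is a plan rather than a proof, with two concrete gaps. First, the reduction to a bad cover of size $\aleph_1$ is asserted but not justified: a space of weight $\aleph_1$ can have $2^{\aleph_1}$ open sets and hence that many $G_\delta$ sets, and the obvious repair --- refining each member of the cover to basic $G_\delta$'s, of which there are only $\aleph_1^{\aleph_0}=\aleph_1$ under CH --- threatens to destroy the ``covers each compact set finitely'' property, which is precisely the hypothesis the rest of your argument leans on (one must at least close the cover under finite unions and argue more carefully). Second, and more seriously, the construction of the Lindel\"of witness $Y$ with $X\times Y$ not Lindel\"of is entirely deferred: you list the properties $Y$ must have (a Lindel\"ofication-type space on $\omega_1$, an open cover of the product with no countable subcover, Lindel\"ofness extracted from the compacta-finite condition plus CH) but never specify its points, its topology, or the uncountable closed discrete set in the product, and the Lindel\"ofness verification --- the step you yourself flag as the main obstacle --- is dispatched with an appeal to elementary submodels. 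That construction is exactly the hard part of Alster's theorem (it is the space $Y'=P\cup A$ that this paper invokes later in the productively $FC$-Lindel\"of theorem), so as written the second assertion remains unproved.
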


\section{$D$-spaces}

We shall now see what we can say about $D$-spaces. Theorem \ref{thm3} and Corollary \ref{cor4} are the important results in this section. No result resembling Corollary \ref{cor5} was previously known.

\begin{lem}[~\cite{Aurichi}]\label{lem2}
Every Menger space is $D$.
\end{lem}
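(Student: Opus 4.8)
The plan is to use the game-theoretic characterisation of the Menger property rather than the bare selection principle, because the strategy I want for the covering player must adapt to the points already chosen. Recall (see~\cite{Scheepers}) that $X$ is Menger if and only if the first player, \textsc{One}, has no winning strategy in the Menger game, in which at round $n$ \textsc{One} plays an open cover $\mathcal U_n$ and \textsc{Two} responds with a finite $\mathcal F_n\subseteq\mathcal U_n$, \textsc{Two} winning exactly when $\bigcup_n\bigcup\mathcal F_n=X$. Fix a neighbourhood assignment $\{V_x\}_{x\in X}$. I would design a strategy $\sigma$ for \textsc{One} so that the points underlying \emph{any} run that \textsc{Two} wins form a closed discrete kernel; since $\sigma$ cannot be winning, at least one such run exists, and it produces the kernel.

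The strategy keeps track of the finite set $A_n$ of points chosen by \textsc{Two} through round $n-1$ (so $A_0=\emptyset$) and of the open set $W_n=\bigcup_{z\in A_n}V_z\supseteq A_n$ already covered by their assigned neighbourhoods. At round $n$, \textsc{One} plays
\[
\mathcal U_n=\{V_z: z\in A_n\}\cup\{V_z: z\in X\setminus W_n\},
\]
a genuine open cover, since points of $W_n$ are caught by the first family and points off $W_n$ by their own $V_z$. Identifying each member of $\mathcal U_n$ with a generating point, \textsc{Two}'s finite response enlarges $A_n$ to $A_{n+1}$, and the only genuinely new points lie in $X\setminus W_n$, i.e.\ \emph{outside} every $V_z$ with $z\in A_n$. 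Setting $Y=\bigcup_n A_n$, every set \textsc{Two} selects is $V_z$ for some $z\in Y$, so if \textsc{Two} wins then $\{V_z:z\in Y\}$ covers $X$ and $Y$ is a kernel.

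The crux is that this $Y$ is closed and discrete, and the device of expelling new points from $W_n$ is exactly what delivers it. Given $p\in X$, a winning run covers $p$ by some selected $V_z$ with $z\in A_{m+1}$; then $V_z\subseteq W_n$ for every $n\ge m+1$, so all points chosen from round $m+1$ onward avoid $V_z$, and $V_z$ is a neighbourhood of $p$ meeting $Y$ only in the finite set $A_{m+1}$. Thus $Y$ is locally finite, and (using $T_1$, as is standard for $D$-spaces) a locally finite set is closed discrete: deleting the finitely many witnessing points exhibits both closedness and discreteness. I expect the main obstacle to be precisely this passage from a merely \emph{countable} kernel, which Lindel\"ofness alone already yields, to a \emph{closed discrete} one; the whole force of the Menger hypothesis is spent, via $\sigma$, on making the covered region $W_n$ grow monotonically and on pushing later choices out of it, so that no point of $X$ can accumulate at $Y$. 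Checking that each $\mathcal U_n$ is a legitimate cover and that the bookkeeping of generating points is consistent is routine.
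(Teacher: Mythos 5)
Your proof is correct: the paper itself gives no proof of this lemma, citing \cite{Aurichi}, and your argument --- turning the neighbourhood assignment into a strategy for \textsc{One} in the Menger game that forces newly selected points outside the already-covered region $W_n$, so that a run won by \textsc{Two} yields a locally finite (hence, under the standing $T_1$ convention, closed discrete) kernel --- is essentially Aurichi's original game-theoretic proof. The only points needing care (fixing a generating point for each chosen set, and the equivalence of Menger with \textsc{One} having no winning strategy) are exactly the routine ones you flag.
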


\begin{thm}\label{thm3}
Every Alster space is Menger.
\end{thm}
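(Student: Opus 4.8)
The plan is to convert a Menger selection problem into a single instance of the Alster property by forming a $G_\delta$-cover out of ``diagonal'' intersections, and then to convert the countable subcover that Alster supplies back into a one-per-level Menger selection by exploiting closure under finite unions.

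First I would fix a sequence $\{\mc{U}_n\}_{n<\omega}$ of open covers, each closed under finite unions, and set
$$\mc{G}=\set{\bigcap_{n<\omega}U_n : U_n\in\mc{U}_n \text{ for all } n}.$$
Every member of $\mc{G}$ is a $G_\delta$ set, and $\mc{G}$ covers $X$: given $x$, for each $n$ choose $U_n\in\mc{U}_n$ with $x\in U_n$, so $x\in\bigcap_n U_n\in\mc{G}$. The key observation is that $\mc{G}$ covers each compact set finitely --- indeed by a single element. If $K$ is compact then for each $n$ finitely many members of $\mc{U}_n$ cover $K$, and since $\mc{U}_n$ is closed under finite unions there is a single $U_n^K\in\mc{U}_n$ with $K\subseteq U_n^K$; hence $K\subseteq\bigcap_n U_n^K\in\mc{G}$. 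Thus $\mc{G}$ is exactly the kind of $G_\delta$-cover to which the Alster property applies, and it yields a countable subcover, say $\{\,\bigcap_n U_n^k : k<\omega\,\}$ with each $U_n^k\in\mc{U}_n$.

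It then remains to diagonalize. Using closure under finite unions again, I would put $V_n=\bigcup_{k\le n}U_n^k\in\mc{U}_n$, so that exactly one element of each $\mc{U}_n$ has been selected. To see that $\{V_n:n<\omega\}$ covers $X$, take any $x$; the countable subcover gives some $k$ with $x\in\bigcap_n U_n^k$, so in particular $x\in U_n^k\subseteq V_n$ for every $n\ge k$. Hence $x\in V_k$, and $\{V_n\}_{n<\omega}$ is the desired cover witnessing Menger.

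I do not anticipate a serious obstacle; the argument is essentially a bookkeeping translation between the two definitions. The only points requiring care are matching the precise form of the Alster hypothesis --- that the $G_\delta$ family cover each compact set by \emph{finitely many} members --- which is why the reduction arranges for each compact set to be swallowed by a single $\bigcap_n U_n^K$, and checking that the diagonal $V_n$ genuinely lands back in $\mc{U}_n$, which is exactly what closure under finite unions guarantees.
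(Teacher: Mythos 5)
Your proof is correct and follows essentially the same route as the paper's: form the family $\mc{G}$ of diagonal intersections, note each compact set lies in a single member, apply Alster, and diagonalize back to a one-per-level selection. The only (immaterial) difference is in the last step, where the paper simply takes $U_{nn}$ from the $n$th member of the countable subcover, while you take the finite union $\bigcup_{k\le n}U_n^k$; both work.
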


\begin{proof}
Let $\{\mc{U}_n\}_{n<\omega}$ be a sequence of open covers of $X$, each closed under finite unions.  Let $\mc{G}$ be the set of all $\bigcap_{n<\omega} U_n\text{'s}$, where $U_n \in \mc{U}_n$.  Let $K$ be any compact subspace of $X$.  Then for each $n<\omega$, $K$ is included in some $U_n\in\mc{U}_n$.  Thus $K$ is included in some $G\in\mc{G}$. Since $X$ is Alster, there are $\{H_k\}_{k<\omega}$ in $\mc{G}$ such that $\bigcup_{k<\omega}H_k$ covers $X$.  Let $H_k = \bigcap_{n<\omega} U_{nk}$, where $U_{nk} \in \mc{U}_n$.  Then $\{U_{nn}\}_{n<\omega}$ covers $X$, since $H_n \subseteq U_{nn}$.  Thus, since each $U_{nn} \in \mc{U}_n$, $X$ is Menger.
\end{proof}

\begin{cor}\label{cor4}
Every Alster space is $D$.
\end{cor}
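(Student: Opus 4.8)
The plan is to obtain the statement by composing the two results immediately preceding it, since ``Alster'' and ``$D$'' are here linked by an intermediate property. First I would invoke Theorem \ref{thm3}, which gives that every Alster space is Menger, to replace the Alster hypothesis by the Menger property. Then I would apply Lemma \ref{lem2} (from \cite{Aurichi}), which asserts that every Menger space is $D$, to conclude that the space is $D$. Transitively composing these two implications yields exactly the desired conclusion.

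There is essentially no obstacle to this argument; the only point worth checking is that the two invoked results apply to the same class of spaces, so that the composition is legitimate. Both Theorem \ref{thm3} and Lemma \ref{lem2} are stated for arbitrary spaces, with no additional separation, weight, or cardinality hypotheses, so the chain \textit{Alster} $\Rightarrow$ \textit{Menger} $\Rightarrow$ $D$ goes through unconditionally and the corollary follows at once.
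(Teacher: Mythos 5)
Your proof is correct and is exactly the paper's intended argument: the corollary follows immediately by composing Theorem \ref{thm3} (Alster implies Menger) with Lemma \ref{lem2} (Menger implies $D$), both of which hold for arbitrary spaces.
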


\begin{cor}\label{cor5}
$CH$ implies every productively Lindel\"of $\,T_3$ space which is either first countable or separable is $D$.
\end{cor}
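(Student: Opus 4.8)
The plan is to reduce both cases to the Alster property and then quote the results already in hand: since a one-point space is Lindel\"of, every productively Lindel\"of space is itself Lindel\"of, so in either case $X$ is a Lindel\"of $T_3$ space. By the second assertion of Lemma \ref{lem1}, under $CH$ every productively Lindel\"of $T_3$ space of weight $\le\aleph_1$ is Alster, and by Corollary \ref{cor4} every Alster space is $D$. Hence it suffices to show that, under $CH$, a Lindel\"of $T_3$ space which is either first countable or separable has weight $\le\aleph_1$; the whole problem thus becomes one of bounding the weight.

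First I would treat the first countable case. Here I would apply Arhangelskii's inequality $|X|\le 2^{L(X)\cdot\chi(X)}$ for Hausdorff spaces; with $L(X)=\chi(X)=\aleph_0$ this gives $|X|\le 2^{\aleph_0}$, which equals $\aleph_1$ under $CH$. Since $X$ is first countable, fixing a countable local base at each point and taking the union produces a base of cardinality at most $|X|\cdot\aleph_0=|X|$, so $w(X)\le|X|\le\aleph_1$.

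For the separable case I would instead use the bound $w(X)\le 2^{d(X)}$ valid for regular spaces. The key observation is that in a $T_3$ space the regular open sets form a base: given an open $U\ni x$, regularity yields an open $V$ with $x\in V\subseteq\overline V\subseteq U$, and then $\operatorname{int}(\overline V)$ is a regular open neighbourhood of $x$ contained in $U$. Moreover each regular open $W$ is recovered from its closure via $W=\operatorname{int}(\overline W)$, and if $D$ is dense then $\overline W=\overline{W\cap D}$, so $W$ is completely determined by the trace $W\cap D\subseteq D$. Consequently there are at most $2^{|D|}=2^{d(X)}$ regular open sets, giving a base of that size. With $d(X)=\aleph_0$ and $CH$, this yields $w(X)\le 2^{\aleph_0}=\aleph_1$.

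In both cases we arrive at $w(X)\le\aleph_1$, so Lemma \ref{lem1} shows $X$ is Alster and Corollary \ref{cor4} shows $X$ is $D$. I expect the only real work to lie in assembling the correct cardinal-function estimates---in particular verifying $w(X)\le 2^{d(X)}$ for $T_3$ spaces through the regular-open-set argument above, and citing Arhangelskii's bound in the first countable case. Once these standard inequalities are established, the passage from ``weight $\le\aleph_1$'' to ``$D$'' is immediate from the previously proved results, so the deduction itself is routine.
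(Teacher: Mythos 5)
Your proposal is correct and follows essentially the same route as the paper: the paper's own proof consists precisely of the observations that first countable Lindel\"of Hausdorff spaces have cardinality and hence weight $\leq 2^{\aleph_0}$ (Arhangelskii) and that separable regular spaces have weight $\leq 2^{\aleph_0}$, after which Lemma \ref{lem1} and Corollary \ref{cor4} finish the argument exactly as you describe. You have merely supplied the standard proofs of these cardinal-function inequalities that the paper leaves implicit.
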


\begin{proof}
First countable Lindel\"of Hausdorff spaces have cardinality and hence weight $\leq 2^{\aleph_0}$; separable regular spaces have weight $\leq 2^{\aleph_0}$.
\end{proof}

\section{Indestructibly productively Lindel\"of\\ spaces}

``Indestructibly productively Lindel\"of'' is much harder to understand than
is ``indestructibly Lindel\"of''. In a previous version of this note, we
prematurely claimed that indestructibly productively Lindel\"of $T_3$ spaces
are Alster. This may well be true, but, at the moment, we do not have a
proof. We can, however, prove this for metrizable spaces, which is the key
result of this section.

\begin{thm}\label{thm7}
A metrizable space is indestructibly productively Lindel\"of if and only
if it is $\sigma$-compact.
\end{thm}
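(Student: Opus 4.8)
The plan is to prove both implications. The forward direction ($\sigma$-compact $\Rightarrow$ indestructibly productively Lindel\"of) is essentially routine; the converse is the substantive half and proceeds by forcing $CH$ and then invoking Lemma~\ref{lem1} twice.

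For the easy direction, suppose $X=\bigcup_{n<\omega}K_n$ with each $K_n$ compact. First I would check that each $K_n$ remains compact after countably closed forcing: a compact metrizable space is second countable, so $K_n$ has a countable base $\mathcal B$ in the ground model, and any open cover in the extension refines to a subfamily of $\mathcal B$; since countably closed forcing adds no new subsets of the countable set $\mathcal B$, this subfamily already lies in the ground model and hence contains a finite subcover. Thus in any countably closed extension $X$ is again $\sigma$-compact, and a $\sigma$-compact space is productively Lindel\"of, because for Lindel\"of $Y$ the product $X\times Y=\bigcup_n(K_n\times Y)$ is a countable union of Lindel\"of spaces. Applying this in each extension shows $X$ is indestructibly productively Lindel\"of.

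For the converse, suppose $X$ is metrizable and indestructibly productively Lindel\"of. The key idea is to manufacture $CH$ by countably closed forcing so that Lemma~\ref{lem1} becomes available. Let $G$ be generic for the countably closed L\'evy collapse $\mathrm{Coll}(\omega_1,2^{\aleph_0})$, which adds no reals and forces $CH$. In $V[G]$ the space $X$ (same underlying set and metric) is still metrizable, hence $T_3$, and by indestructibility it is productively Lindel\"of; being productively Lindel\"of it is Lindel\"of, and a Lindel\"of metrizable space is second countable, so $X$ has weight $\le\aleph_0\le\aleph_1$ in $V[G]$. Now Lemma~\ref{lem1} applies there: under $CH$ a productively Lindel\"of $T_3$ space of weight $\le\aleph_1$ is Alster, and an Alster metrizable space is $\sigma$-compact. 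Hence $X=\bigcup_{n<\omega}K_n$ with each $K_n$ compact, as computed in $V[G]$.

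It remains to pull this decomposition back to $V$, and this is where I expect the main obstacle to lie. Here I would exploit that countably closed forcing adds no new $\omega$-sequences of ground model elements. Each $K_n$, being compact metrizable, is separable, so it has a countable dense subset $D_n=\{d_{n,k}:k<\omega\}\subseteq X$; as this is an $\omega$-sequence of ground model points, it already belongs to $V$. Since closures are computed absolutely from the ground model metric, $K_n=\overline{D_n}$ is a ground model set, and it is compact in $V$ because compactness only becomes easier in the smaller model $V$, which has fewer open covers. Finally the double sequence $\langle d_{n,k}:n,k<\omega\rangle$ is an $\omega$-sequence of elements of $V$, so $\langle D_n:n<\omega\rangle\in V$ and therefore $\langle K_n:n<\omega\rangle\in V$; the equation $X=\bigcup_n K_n$, being an assertion about ground model objects, then holds in $V$, and $X$ is $\sigma$-compact. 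The delicate points to verify carefully are the absoluteness of compactness and of closure under the forcing, and the reflection of the relevant countable objects into $V$.
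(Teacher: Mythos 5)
Your proof is correct, and its central move --- forcing $CH$ by collapsing $2^{\aleph_0}$ to $\aleph_1$ with a countably closed poset and then invoking Lemma \ref{lem1} in the extension to get $\sigma$-compactness there --- is exactly the paper's. Where you genuinely diverge is in how the $\sigma$-compact decomposition is pulled back to the ground model. The paper first reduces to the $0$-dimensional case via Lemma \ref{lem18}, showing by a perfect-preimage argument that indestructible productive Lindel\"ofness passes to the $0$-dimensional preimage, then regards $X$ as a subspace of the Cantor set $\mathbb{K}$ and uses the fact that $\mathbb{K}$, having a countable base, acquires no new closed sets and hence no new $F_\sigma$'s under countably closed forcing. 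You instead note that each compact piece $K_n$ is separable metrizable, that its countable dense subset is an $\omega$-sequence of ground-model points and so already lies in $V$, and that closures in a ground-model metric space are absolute; together with the downward preservation of compactness and one more application of countable closure to capture the whole sequence $\langle K_n : n<\omega\rangle$, this recovers the decomposition in $V$ with no embedding and no dimension reduction at all. Your route is somewhat more economical --- it dispenses with Lemma \ref{lem18} and the Cantor-set coding entirely --- at the modest cost of the absoluteness checks you correctly flag, all of which go through. The easy direction is handled essentially identically in both proofs.
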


\begin{proof}
The backward direction is routine, since ``$\sigma$-countably-compact'' and ``metrizable'' are both preserved by countably closed forcing, and so hence is ``$\sigma$-compact metrizable''. It is well-known that $\sigma$-compact spaces are productively Lindel\"of. For the other direction, first recall:

\begin{lem}[~(\Engelkingref)]\label{lem18}
Every separable metrizable space is a perfect image of a $0$-dimensional separable metrizable space.
\end{lem}

We claim that if we prove Theorem \ref{thm7} for $0$-dimensional spaces,
it will follow for all spaces. For suppose $X$ is an indestructibly
productively Lindel\"of metrizable space and $X'$ is its $0$-dimensional
perfect pre-image by a map $f$. Note that a space with a countable base
has no new open or closed sets in a countably closed extension. Thus $f$
remains continuous and closed in the extension. It may not be perfect, but
inverse images of points are Lindel\"of. Let $Y$ be Lindel\"of in the
extension. Then $f \times \mathrm{id}_Y$ also is continuous, closed, and
has inverse images of points Lindel\"of. Since $X \times Y$ is Lindel\"of,
it follows that $X' \times Y$ is Lindel\"of. Thus $X'$ is indestructibly
productively Lindel\"of. It is then $\sigma$-compact and therefore so is $X$.
Thus without loss of generality, we shall assume our space $X$ is
$0$-dimensional, and hence can be considered as a subspace of the Cantor set
$\mathbb{K}$.

Collapse $2^{\aleph_0}$ to $\aleph_1$ by countably closed forcing. Then $X$ remains productively
Lindel\"of. By Lemma \ref{lem1}, it is $\sigma$-compact in the extension.
As noted above, $\mathbb{K}$ has no new closed sets and hence no new
$F_\sigma$'s. Thus if $X = \bigcup_{n<\omega} F_n$ in the extension, where
the $F_n$'s are compact and hence closed subspaces of $\mathbb{K}$, then the
$F_n$'s are actually in the ground model and hence compact there as well.
\end{proof}

There are some other conditions that imply indestructibly productively
Lindel\"of:

%
%

\begin{thm}\label{thm6}
Every Lindel\"of space which either is scattered or is a $P$-space
or is indestructibly Lindel\"of and $\sigma$-compact
is indestructibly productively Lindel\"of.
\end{thm}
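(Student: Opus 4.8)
The plan is to reduce all three cases to one sufficient condition: to prove $X$ is indestructibly productively Lindel\"of it is enough, by the very definition, to show $X$ is productively Lindel\"of in every countably closed extension $V[G]$; and by Lemma~\ref{lem1} it suffices for this to exhibit, in $V[G]$, either an Alster structure or a $\sigma$-compact structure on $X$. Two features of countably closed forcing will be used throughout. First, since no new $\omega$-sequences of ground model objects are added, any \emph{countable} family of ground model basic open sets --- in particular any countable subcover extracted in $V[G]$ from a cover by ground model basic opens --- already lies in $V$. Second, the basis is absolute, so a ground model set is closed in $V[G]$ iff it is closed in $V$, and a singleton is open in $V[G]$ iff it is open in $V$ (isolated points are absolute).

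For the $\sigma$-compact case, let $X$ be indestructibly Lindel\"of with $X=\bigcup_{n}K_{n}$, each $K_{n}$ compact. In $V[G]$ the space $X$ is still Lindel\"of, and each $K_{n}$, a ground model closed set, is still closed, hence Lindel\"of, in $V[G]$. I claim each $K_{n}$ is still compact: given a cover of $K_{n}$ in $V[G]$, refine it to a cover by ground model basic opens and extract a countable subcover by Lindel\"ofness; this countable subfamily lies in $V$ by countable closure, so ground model compactness of $K_{n}$ yields a finite subcover, valid in $V[G]$. Thus $X$ is $\sigma$-compact in $V[G]$ and therefore productively Lindel\"of there (as noted in the proof of Theorem~\ref{thm7}). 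Note that compactness by itself is \emph{not} preserved --- the ground model copy of $2^{\aleph_{1}}$ becomes a dense, non-closed, non-compact subspace of its reinterpretation after one adds a subset of $\omega_{1}$ --- which is precisely why the indestructible Lindel\"ofness hypothesis is needed here.

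For the $P$-space case, the $P$-property is preserved: if $U_{n}$ are open in $V[G]$ and $x\in\bigcap_{n}U_{n}$, choose ground model basic $B_{n}$ with $x\in B_{n}\subseteq U_{n}$; the sequence $(B_{n})_{n}$ lies in $V$ by countable closure, so the $P$-property in $V$ gives an open $W_{x}\ni x$ with $W_{x}\subseteq\bigcap_{n}B_{n}$, whence $\bigcap_{n}U_{n}$ is open. Moreover a Lindel\"of $P$-space is Rothberger: given covers $\mc{U}_{n}$, reduce each to a countable subcover $\{U_{n,k}:k<\omega\}$, put $k_{n}(x)=\min\{k:x\in U_{n,k}\}$, take a countable subcover $\{G_{x_{i}}:i<\omega\}$ of the cover $\{\bigcap_{n}U_{n,k_{n}(x)}:x\in X\}$ (open since $X$ is a $P$-space), and select $U_{n,k_{n}(x_{n})}$ for each $n$; since $G_{x_{i}}\subseteq U_{i,k_{i}(x_{i})}$ this selection covers $X$. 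By the cited result that Rothberger spaces are indestructibly Lindel\"of, $X$ stays Lindel\"of in $V[G]$, and being a $P$-space there, every $G_{\delta}$ cover is an open cover, so $X$ is Alster in $V[G]$. For the scattered case, absoluteness of isolated points makes the Cantor--Bendixson derivatives $X^{(\alpha)}$, and hence the scattered rank, absolute, so $X$ is still scattered in $V[G]$; and indestructible Lindel\"ofness follows by induction on the rank, covering the derived set $X'$ (of smaller rank, Lindel\"of in $V[G]$ by the inductive hypothesis) by countably many ground model basic opens, which lie in $V$ by countable closure, so that the uncovered remainder is a ground model closed set inside the isolated points --- closed discrete, hence countable --- and is mopped up by countably many further sets. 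Thus in $V[G]$ the space $X$ is again scattered and Lindel\"of.

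The main obstacle is the remaining $ZFC$ implication: a scattered Lindel\"of space is Alster. I would induct on the Cantor--Bendixson rank: given a cover $\mc{G}$ by $G_{\delta}$ sets covering each compact set finitely, cover $X'$ by a countable subfamily $\mc{G}_{0}$ and try to show that the set $R=X\setminus\bigcup\mc{G}_{0}$ of uncovered isolated points is countable, after which countably many more members of $\mc{G}$ finish the cover. The delicate point is that $\bigcup\mc{G}_{0}$, a countable union of $G_{\delta}$'s, need \emph{not} be open, so $R$ need not be closed and the clean ``closed discrete is countable'' bound used in the indestructibility argument is unavailable. The resolution I would pursue is to locate a condensation point $p$ of $R$, observe that $p\in X'$ and hence $p\in G$ for some $G\in\mc{G}_{0}$ with $G\cap R=\emptyset$, and derive a contradiction with Lindel\"ofness --- equivalently, to prove the sharper statement that the $G_{\delta}$-modification of a scattered Lindel\"of space is itself Lindel\"of. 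Getting this remainder-countability step right is where the real work lies; everything else is bookkeeping with countable closure.
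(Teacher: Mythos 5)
Your $\sigma$-compact and $P$-space cases are complete and correct, and in effect you reprove what the paper merely cites: the paper's own proof of Theorem \ref{thm6} consists of quoting a lemma of Juh\'asz (Lindel\"ofness and scatteredness of scattered spaces are preserved by any forcing), Lemma \ref{lem7} of Barr--Kennison--Raphael (Lindel\"of scattered spaces and Lindel\"of $P$-spaces are Alster), a lemma of Scheepers (Lindel\"of $P$-spaces are Rothberger, hence indestructible), the observation that the $P$-property is preserved by countably closed forcing, and the argument from the proof of Theorem \ref{thm7} for the $\sigma$-compact case. Your direct verifications --- compactness of each $K_n$ in the extension via ``countable families of ground-model open sets lie in $V$,'' preservation of the $P$-property, the Rothberger selection for Lindel\"of $P$-spaces, and ``$G_\delta$ covers are open covers in a $P$-space, so Lindel\"of $P$-spaces are Alster'' --- are all sound.

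The scattered case, however, is genuinely incomplete, on two counts. First, your induction for preserving Lindel\"ofness does not close: the Cantor--Bendixson rank of $X'$ equals that of $X$ whenever the latter is a limit ordinal, and limit ranks of countable cofinality do occur for Lindel\"of scattered spaces (e.g.\ $\bigoplus_{n<\omega}(\omega^n+1)$ has rank $\omega$), so ``$X'$ has smaller rank, apply the inductive hypothesis'' breaks down at limit stages; this is precisely why the paper invokes Juh\'asz's theorem rather than a two-line induction. Second, and more substantively, you explicitly leave open the implication ``Lindel\"of scattered $\Rightarrow$ Alster'' (needed to get productive Lindel\"ofness in the extension); your proposed route via countability of the uncovered remainder $R$ --- equivalently, via the Lindel\"ofness of the $G_\delta$-modification --- is plausible, but as you yourself say, the key step is not carried out, and the difficulty you identify (a countable union of $G_\delta$'s need not be open, so $R$ need not be closed discrete) is exactly the point where the cited Lemma \ref{lem7} does real work. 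Until both steps are supplied, or replaced by the citations the paper uses, the scattered third of the theorem remains unproved.
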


\begin{proof}
This follows easily since:
\begin{lem}[~\cite{Juhasz1989}]
The Lindel\"ofness (and scatteredness) of a scattered space is preserved by any forcing.
\end{lem}
\begin{lem}[~\cite{Barr2000}]\label{lem7}
Every Lindel\"of space which either is scattered or is a $P$-space is Alster.
\end{lem}
\begin{lem}[~\cite{Scheepers}]
Lindel\"of $P$-spaces are Rothberger and hence indestructible.
\end{lem}

Clearly the $P$-property ($G_\delta$'s open) is preserved by countably closed forcing. That indestructibly Lindel\"of $\sigma$-compact spaces are indestructibly productively Lindel\"of follows from the proof of Theorem \ref{thm7}.
\end{proof}

We don't know whether indestructibly Lindel\"of or productively Lindel\"of implies $D$. Indestructibly productively Lindel\"of does \cite{TallNote}. Note
that an indestructibly Lindel\"of non-$D$ space remains non-$D$ in any countably closed forcing extension.

\begin{thm}
Suppose $X$ is Lindel\"of, $D$, and countably tight.  Then $X$ is indestructibly Lindel\"of if it is indestructibly $D$.
\end{thm}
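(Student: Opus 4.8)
The plan is to reduce indestructible Lindel\"ofness to the preservation of countable extent, and then to establish that preservation by a forcing construction carried out entirely in the ground model, using countable tightness only there.

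First I would isolate an elementary \emph{reduction lemma}: a $D$-space with countable extent (i.e.\ every closed discrete subspace is countable) is Lindel\"of. Given an open cover, refine it to a neighbourhood assignment $\{V_x\}_{x\in X}$ with each $V_x$ a member of the cover; the $D$ property yields a closed discrete kernel $Y$ with $\{V_x\}_{x\in Y}$ covering $X$, and countable extent forces $Y$ to be countable, hence a countable subcover. Since $X$ is indestructibly $D$, in any countably closed extension $V[G]$ the space $X$ is still $D$, so by the reduction lemma applied in $V[G]$ it suffices to show that $X$ retains countable extent in $V[G]$.

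Second I would record a \emph{tightness lemma}: in a countably tight space a set $E$ is closed discrete as soon as each of its countable subsets is closed discrete. If $x\in\overline E$ then countable tightness gives $x\in\overline{E_0}$ for some countable $E_0\subseteq E$, and $E_0$ closed forces $x\in E_0\subseteq E$; applying the same idea to $F\cup\{x\}$ for countable $F\subseteq E\setminus\{x\}$ shows each point of $E$ is isolated in $E$. The point to note is that this uses countable tightness of $X$ only in $V$, so I never need to argue that tightness is preserved by the forcing.

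The heart of the argument, and the step I expect to be the main obstacle, is preservation of countable extent. Suppose toward a contradiction that some condition $p$ in a countably closed $\mathbb{P}$ forces a name $\dot D$ to be an uncountable closed discrete subset of $X$; such a $\dot D$ exists whenever $X$ fails to be Lindel\"of in $V[G]$, by the reduction lemma. Working in $V$, I would build by recursion on $\alpha<\omega_1$ a $\subseteq$-decreasing sequence of conditions $\langle p_\alpha:\alpha<\omega_1\rangle$ below $p$ together with distinct ground model points $x_\alpha\in X$ so that $p_\alpha\Vdash x_\alpha\in\dot D$: at a limit stage countable closedness supplies a lower bound of the countably many conditions chosen so far, and since $p$ forces $\dot D$ uncountable, this bound can be extended to catch a point $x_\alpha$ outside the countable set $\{x_\beta:\beta<\alpha\}$. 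The decreasing structure is exactly what makes this pay off: for any countable $I\subseteq\omega_1$ the conditions $\{p_\alpha:\alpha\in I\}$ form a countable descending chain, so they have a common lower bound $q_I$ with $q_I\Vdash\{x_\alpha:\alpha\in I\}\subseteq\dot D$. As $\dot D$ is forced closed discrete and $\{x_\alpha:\alpha\in I\}$ is a ground model set whose closure and discreteness are computed from the ground model base, this set is already closed discrete in $V$. Thus every countable subset of $E=\{x_\alpha:\alpha<\omega_1\}$ is closed discrete in $V$, so the tightness lemma makes $E$ itself an uncountable closed discrete subset of $X$ in $V$, contradicting the Lindel\"ofness of $X$, whose closed discrete subspaces are necessarily countable. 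The delicate points to check are that the length-$\omega_1$ recursion genuinely lives in $V$ (each limit stage asks only for lower bounds of countable descending chains, which countable closedness provides) and that $\omega_1$ is preserved, so that an uncountable $E$ is really produced.
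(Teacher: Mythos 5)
Your argument is correct and follows the same reduction as the paper: since $X$ remains $D$ in any countably closed extension, it suffices to show that it retains countable extent there, because a $D$-space of countable extent is Lindel\"of. The difference is that the paper simply cites \cite{Tall1995} for the fact that countably closed forcing preserves the countable extent of Lindel\"of countably tight spaces, whereas you prove that fact from scratch; your proof of this step is sound and is essentially the argument behind the cited result. In particular, the descending $\omega_1$-chain of conditions exists by countable closedness; each countable $\{x_\alpha:\alpha\in I\}$ is forced closed discrete (subsets of closed discrete sets are closed discrete) and, being a ground-model set, is closed discrete already in $V$; and your tightness lemma, applied in $V$ only, assembles an uncountable closed discrete subset of $X$ in $V$, contradicting Lindel\"ofness. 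The one spot worth writing out more fully is the reflection step: for each $x\in X$, a condition below $q_I$ decides a ground-model basic open $U\ni x$ with $\lvert U\cap\{x_\alpha:\alpha\in I\}\rvert\le 1$ (the extension's topology being generated by the ground-model base), and that statement about ground-model objects is absolute, which is exactly what makes the set closed discrete in $V$.
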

\begin{proof}
In \cite{Tall1995} it is shown that after countably closed forcing, Lindel\"of countably tight spaces retain countable extent.  But $D$-spaces with countable extent are Lindel\"of.
\end{proof}

\begin{thm}
Suppose $X$ is Lindel\"of, $D$, $|X| \leq \aleph_1$. Then $X$ is both indestructibly Lindel\"of and indestructibly $D$.
\end{thm}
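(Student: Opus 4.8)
The plan is to obtain both indestructibility statements from a single mechanism supplied by the cardinality bound together with countable closure. First I would record the relevant behaviour of the forcing. A countably closed poset adds no new countable sequence of ground model elements; in particular it preserves $\omega_1$, so $|X|\le\aleph_1$ still holds in any extension $V[G]$, and the ground model open sets remain a base for $X$. Consequently whether a subset is closed, discrete, or a cover is always decided through the ground model base $\tau$, and is therefore computed the same way in $V$ and in $V[G]$. The consequence of closure I would exploit is this: if $A\subseteq X$ is a \emph{countable} set lying in $V$, then any function $A\to\tau$ that exists in $V[G]$ is a countable set of ground model objects, hence already lies in $V$. Thus the genuinely new objects of the extension --- a neighbourhood assignment on all of $X$, or an open cover --- are nonetheless \emph{locally} ground model objects along any ground model filtration of $X$ by countable sets.

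Next I would reduce the Lindel\"of conclusion to the other two properties. Recall (as quoted in the excerpt) that a space which is $D$ and has countable extent is Lindel\"of; hence it suffices to prove that in $V[G]$ the space $X$ is still $D$ and still has countable extent. The first of these is precisely indestructible $D$-ness, and the two together yield that $X$ is Lindel\"of in $V[G]$, i.e.\ indestructibly Lindel\"of. So the whole theorem comes down to (A) $X$ is $D$ in $V[G]$, and (B) every closed discrete subset of $X$ in $V[G]$ is countable.

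For (A), fix in $V$ a continuous increasing filtration $X=\bigcup_{\alpha<\omega_1}X_\alpha$ with each $X_\alpha$ countable, and let $\{V_x\}_{x\in X}$ be a neighbourhood assignment in $V[G]$, which I may assume takes values in $\tau$. By the closure observation each restriction $\{V_x\}_{x\in X_\alpha}$ lies in $V$, so I can extend it to a full ground model assignment and apply the $D$-property of $X$ \emph{in $V$} to produce ground model closed discrete kernels. The aim is to splice these along the filtration into one kernel $Y=\bigcup_{\alpha<\omega_1}Y_\alpha$ that covers $X$ and is closed discrete in $V[G]$; here I would use that a kernel need only be closed discrete, not countable, so a construction of length $\omega_1$ is permitted. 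Discreteness and the covering property can be arranged stagewise from the ground model $D$-property applied to the correct local assignment. The delicate point, which I expect to be the \emph{main obstacle}, is verifying that $Y$ is \emph{closed} in $V[G]$: since closedness is read off the ground model base and any potential limit point $z$ already lies in some $X_\alpha$, this should reduce to stabilising the trace of $Y$ near each point by a bounded stage, using the continuity of the filtration and the fact that all the local data are in $V$.

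For (B), suppose toward a contradiction that $D\subseteq X$ is closed discrete in $V[G]$ with $|D|=\aleph_1$. Using the same filtration, each $D\cap X_\alpha$ is a countable set of ground model points, hence lies in $V$, and (being a subset of a closed discrete set, computed through the base) is closed discrete in $V$ as well. Thus $D$ is realised in $V[G]$ as the union of a continuous increasing $\omega_1$-chain of countable ground model closed discrete sets whose total union is again closed discrete. But $X$ is Lindel\"of in $V$, so $V$ contains no closed discrete set of size $\aleph_1$; the task is to convert the generic chain into a ground model witness contradicting this. The obstruction is that the chain is generic rather than a member of $V$, so no single condition decides it; I would handle this by the standard preservation argument for countably closed forcing, reading off from a name --- along the countable approximations that do lie in $V$ --- an uncountable ground model subset of $X$ forced to be closed discrete, against $V$-Lindel\"ofness. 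This reflection, together with the limit-stage closedness in (A), is where the hypotheses $|X|\le\aleph_1$ (giving a filtration and constructions of length only $\omega_1$) and countable closure (placing all countable approximations into $V$) do the essential work.
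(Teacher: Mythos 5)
Your proposal has genuine gaps at exactly the two places you yourself flag as ``delicate'', and neither is a removable technicality: the filtration-and-splicing strategy does not close. In (A), the union $Y=\bigcup_{\alpha<\omega_1}Y_\alpha$ of $\omega_1$-many local kernels has no reason to be closed discrete, and each $Y_\alpha$ is a kernel only for an \emph{artificial} extension of the partial assignment $\{V_x\}_{x\in X_\alpha}$, so its covering property says nothing about the true assignment off $X_\alpha$. The strategy is also self-defeating: since the theorem implies $X$ remains Lindel\"of in the extension, any kernel there must be countable, so a correct proof should produce a countable kernel outright rather than an $\omega_1$-length union that would have to collapse. In (B), passing from a generic increasing chain of countable ground-model closed discrete pieces to a single uncountable ground-model closed discrete set is not ``the standard preservation argument'': the pieces are decided only generically, and even if you decide them along one descending chain of conditions you obtain (at best) an uncountable \emph{discrete} set in $V$, which does not contradict Lindel\"ofness --- verifying closedness in $V$ is the unresolved point. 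Your one correct and relevant observation --- that countable functions into ground-model objects lie in $V$ --- is not enough to carry either step.

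The missing idea, which is the paper's actual argument, is to index the construction by \emph{conditions} rather than by a filtration of $X$. Write $X=\{x_\alpha:\alpha<\omega_1\}$; given a name for a neighbourhood assignment (without loss of generality by ground-model open sets) and a condition $p$, use countable closure to build a descending sequence $\{p_\alpha\}_{\alpha<\omega_1}$ below $p$ with $p_\alpha$ deciding $\dot V_{x_\alpha}=V_{x_\alpha}$. The decided values form a ground-model neighbourhood assignment; by $D$-ness it has a closed discrete kernel, and by Lindel\"ofness that kernel is \emph{countable}, say $\{x_{\alpha_n}\}_{n<\omega}$. Hence a single condition $p_{\alpha_\omega}$ with $\alpha_\omega\geq\sup_n\alpha_n$ (still a countable index) lies below every $p_{\alpha_n}$ and forces the checked countable set to be a kernel of the original assignment: ground-model closed discrete sets remain closed discrete, and $\bigcup_{n<\omega}V_{x_{\alpha_n}}=X$ is absolute because $X$ acquires no new points. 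Density of such conditions gives indestructible $D$-ness. For indestructible Lindel\"ofness the paper simply cites Tall's theorem that Lindel\"of spaces of cardinality $\leq\aleph_1$ are indestructible; your detour through ``$D$ plus countable extent'' is unnecessary and, as written, rests on the unproved claim (B).
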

\begin{proof}
That Lindel\"of spaces of size $\leq \aleph_1$ are indestructible was proved in \cite{Tall1995}. Suppose $\{\dot{V}_{x_\alpha}\}_{\alpha<\omega_1}$ is a neighbourhood assignment in the extension; without loss of generality, we may assume each $\dot{V}_{x_\alpha}$ is a ground model open set. Given an arbitrary condition $p$ forcing all this, take below $p$ a descending sequence of conditions $\{p_\alpha\}_{\alpha<\omega_1}$ deciding $\dot{V}_{x_\alpha}$. The resulting $V_{x_\alpha}$'s form a neighbourhood assignment in the ground model. It had a countable kernel $\{x_{\alpha_n}\}_{n<\omega}$. Let $\alpha_\omega \ge \text{ each } \alpha_n$. Then $p_{\alpha_\omega}$ forces $(\{x_{\alpha_n}\}_{n<\omega}){\check{\hspace*{1.5pt}}}$ is a kernel for $\{\dot{V}_{x_\alpha}\}_{\alpha<\omega_1}$.
\end{proof}

\begin{cor}\label{cor13}
$CH$ implies productively Lindel\"of first countable $T_3$ spaces are indestructibly $D$.
\end{cor}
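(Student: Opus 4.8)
The plan is to show that $X$ satisfies the hypotheses of the Theorem immediately preceding this corollary --- that $X$ is Lindel\"of, $D$, and of cardinality at most $\aleph_1$ --- whereupon indestructible $D$-ness follows at once. That $X$ is Lindel\"of comes for free: a productively Lindel\"of space is Lindel\"of, as one sees by taking the second factor to be a single point. So only the cardinality bound and the $D$ property remain to be verified.

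For the cardinality I would use the same observation already invoked in the proof of Corollary \ref{cor5}, namely Arhangel'ski\u{\i}'s theorem that a first countable Lindel\"of Hausdorff (hence in particular $T_3$) space has cardinality at most $2^{\aleph_0}$. Under $CH$ this reads $|X| \le \aleph_1$. For the $D$ property I would simply quote Corollary \ref{cor5}, which under $CH$ asserts that every first countable productively Lindel\"of $T_3$ space is $D$.

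Having assembled that $X$ is Lindel\"of, $D$, and of size $\le \aleph_1$, I would apply the preceding Theorem verbatim to conclude that $X$ is indestructibly $D$ (and, incidentally, indestructibly Lindel\"of as well). There is no substantial obstacle here: the argument is a direct assembly of earlier results, and the only point to keep straight is the order of the deductions --- one must first pass from productive Lindel\"ofness to plain Lindel\"ofness so that the Arhangel'ski\u{\i}\ bound, which requires Lindel\"ofness, can be brought to bear, and only then feed the resulting ``Lindel\"of, $D$, size $\le\aleph_1$'' package into the preceding Theorem.
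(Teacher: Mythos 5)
Your proof is correct and is essentially the argument the paper intends (the paper leaves the corollary's proof implicit): under $CH$, Arhangel'ski\u{\i}'s bound gives $|X|\le\aleph_1$, Corollary \ref{cor5} gives the $D$ property, and the immediately preceding Theorem then yields indestructible $D$-ness. No gaps.
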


Productively Lindel\"of $D$-spaces are not necessarily indestructibly $D$: consider the usual product topology on $2^{\omega_1}$.  Adding a Cohen subset of $\omega_1$ with countable conditions makes $2^{\omega_1}$ non-Lindel\"of~\cite{Tall1995}, but countably closed forcing preserves countable compactness. Since countably compact plus $D$ = compact, we see that the forcing does not preserve $D$.  Thus countably closed forcing does not preserve Menger or Alster.  Neither does it preserve productively Lindel\"of.  To see this, again consider adding a Cohen subset of $\omega_1$ with countable conditions.  $2^{\omega_1}$ is productively Lindel\"of in the ground model; its weight is $\aleph_1$, so in the extension, if it were productively Lindel\"of, it would be Alster, since $CH$ holds.

Since Rothberger implies both indestructibly Lindel\"{o}f and Menger, one might wonder if it is strong enough to imply productively Lindel\"of.  It is not; see Section 9 below.  Thus Lindel\"of productivity is not a necessary condition for $D$-ness in Lindel\"of spaces.  Alster does not imply Rothberger, since Alster is equivalent to $\sigma$-compact in metrizable spaces~\cite{Alster1988}, but Rothberger subsets of the real line have strong measure zero --- see e.g.~\cite{Miller1984}, where they are called $C''$ sets.  Similarly, indestructibly productively Lindel\"of does not imply Rothberger --- consider the closed unit interval.

Indestructibly Lindel\"of spaces need not be productively Lindel\"of; a Bernstein (totally imperfect) set of reals provides a counterexample~\cite{Michael1971}.  Alster does not imply indestructibly productively Lindel\"of, since $\sigma$-compact spaces are Alster~\cite{Alster1988}.

Among the properties we have considered so far, the interesting open questions (say for $T_3$ spaces) are:
{\it
\begin{enumerate}
\item Do any of Lindel\"of, indestructibly Lindel\"of, productively Lindel\"of imply $D$?
\item Does productively Lindel\"of imply Alster \cite{Barr2000}? (This question was first asked in \cite{Alster1988}, with different terminology.) Does
indestructibly productively Lindel\"of imply Alster?
\item Are indestructibly Lindel\"of, productively Lindel\"of spaces indestructibly productively Lindel\"of?
\item Are indestructibly Lindel\"of $D$-spaces indestructibly $D$?
\end{enumerate}
}

%

\section{Productively Lindel\"of completely\\ metrizable spaces}

The question of whether productively Lindel\"of spaces are Alster reduces in the metrizable case to  whether they are $\sigma$-compact. The second author examines this in detail in \cite{Tall}; here we shall mainly confine ourselves to the completely metrizable case. Recall the famous problem of E.~Michael which asks whether there is a Lindel\"of space whose product with the space $\mathbb{P}$ of irrationals is not Lindel\"of. See~\cite{Michael1963}, \cite{Michael1971}, \cite{Moore1999}. We shall prove:

\begin{thm}\label{thm14}
The following assertions are equivalent:
\begin{enumerate}[a)]
\item Every completely metrizable productively Lindel\"of space is Menger,
\item Every completely metrizable productively Lindel\"of space is Alster,
\item Every completely metrizable productively Lindel\"of space is $\sigma$-compact,
\item There is a Lindel\"of space $X$ such that $X \times \mathbb{P}$ is not Lindel\"of.
\end{enumerate}
\end{thm}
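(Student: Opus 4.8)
The plan is to prove the four statements equivalent by establishing the block $(c)\Rightarrow(b)\Rightarrow(a)\Rightarrow(c)$ together with $(c)\Leftrightarrow(d)$, with the one genuinely non-soft ingredient being the classical theorem of Hurewicz that a Polish space is $\sigma$-compact if and only if it contains no closed copy of the irrationals $\mathbb{P}$ (\Kechrisi). The first thing I would record is a reduction that lets this descriptive set theory apply: a productively Lindel\"of completely metrizable space is automatically Polish, since being Lindel\"of and metrizable it is second countable, hence separable, and separable plus completely metrizable is Polish. So throughout we may treat the spaces in $(a)$--$(c)$ as Polish.

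For the block $(a)\Leftrightarrow(b)\Leftrightarrow(c)$, two of the three links are soft: $(c)\Rightarrow(b)$ is immediate because $\sigma$-compact spaces are Alster~\cite{Alster1988}, and $(b)\Rightarrow(a)$ is exactly Theorem \ref{thm3}. The implication carrying the content is $(a)\Rightarrow(c)$. Here I would take an arbitrary completely metrizable productively Lindel\"of $X$; by $(a)$ it is Menger, and since it is Polish, Hurewicz's theorem forces $\sigma$-compactness, for otherwise $X$ would contain a closed copy of $\mathbb{P}$, whereas Menger passes to closed subspaces and $\mathbb{P}\cong\omega^\omega$ is not Menger.

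For $(c)\Leftrightarrow(d)$ I would argue by contraposition in both directions, the relevant soft fact being that productive Lindel\"ofness is closed-hereditary: if $F$ is closed in a productively Lindel\"of $Y$ and $Z$ is Lindel\"of, then $F\times Z$ is closed in the Lindel\"of $Y\times Z$, hence Lindel\"of. For $(c)\Rightarrow(d)$, note that the failure of $(d)$ says precisely that $\mathbb{P}$ is productively Lindel\"of; but $\mathbb{P}$ is completely metrizable and not $\sigma$-compact, contradicting $(c)$. For $(d)\Rightarrow(c)$, suppose $(c)$ fails, so there is a Polish productively Lindel\"of non-$\sigma$-compact space $Y$; by Hurewicz $Y$ contains a closed copy of $\mathbb{P}$, which is therefore productively Lindel\"of, and so $\mathbb{P}$ is productively Lindel\"of, i.e. $(d)$ fails.

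The main obstacle is the single descriptive-set-theoretic step, the invocation of Hurewicz's dichotomy, which is used twice (to get $\sigma$-compactness from Menger in $(a)\Rightarrow(c)$, and to extract a closed copy of $\mathbb{P}$ in $(d)\Rightarrow(c)$), together with the verification that $\mathbb{P}$ is not Menger; this is also exactly where complete metrizability is essential, since a merely metrizable Menger space need not be $\sigma$-compact. Everything else reduces to soft manipulation of Lindel\"of products and the already-cited implications $\sigma$-compact $\Rightarrow$ Alster $\Rightarrow$ Menger.
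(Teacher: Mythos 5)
Your proof is correct, and for the pivotal implication a) $\Rightarrow$ c) it takes a genuinely different route from the paper. The paper reduces to the $0$-dimensional case via perfect preimages (Lemma \ref{lem18}), embeds the resulting space as a closed subspace of $\mathbb{P}$ (Lemmas \ref{lem19} and \ref{lem20}), and then invokes Hurewicz's 1925 theorem that analytic (in particular $G_\delta$) sets of reals are Menger if and only if they are $\sigma$-compact. You instead stay inside the given Polish space and apply the Hurewicz dichotomy (the same result the paper quotes as \Kechrisi\ for the d) part): a non-$\sigma$-compact Polish space contains a closed copy of $\mathbb{P}$, which would have to be Menger since Menger is closed-hereditary, contradicting the non-Mengerness of $\omega^\omega$. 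Your version avoids the perfect-map machinery entirely and uses only one classical input instead of two, at the modest cost of having to verify the two soft facts that Menger is inherited by closed subspaces and that $\omega^\omega$ is not Menger (both standard; the latter by a dominating-function argument). Your handling of the cycle c) $\Rightarrow$ b) $\Rightarrow$ a) $\Rightarrow$ c) also differs cosmetically from the paper, which gets b) $\Leftrightarrow$ c) directly from the fact (Lemma \ref{lem1}) that Alster metrizable spaces are $\sigma$-compact; your cycle renders that fact unnecessary for this theorem. The c) $\Leftrightarrow$ d) argument is the same as the paper's, merely repackaged via the observation that productive Lindel\"ofness is closed-hereditary.
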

\begin{proof}
As mentioned earlier, a metrizable space is Alster if and only if it is $\sigma$-compact, if and only if it is indestructibly productively Lindel\"of. To show a), b), c) equivalent, then, it suffices to prove a) implies c). We note that every productively Lindel\"of space is Lindel\"of, and hence, if metrizable, is separable.

We next need:

\begin{lem}[~\Kechrisref]\label{lem19}
Every $0$-dimensional separable metrizable \v{C}ech- complete space is homeomorphic to a closed subspace of $\mathbb{P}$, the space of irrationals, considered as $\omega^\omega$.
\end{lem}

These yield:

\begin{lem}\label{lem20}
If there is a productively Lindel\"of completely metrizable space which is not $\sigma$-compact, then there is one included in $\mathbb{P}$.
\end{lem}

\begin{proof}
As in the proof of Theorem \ref{thm7}, if $X'$ maps perfectly onto a productively Lindel\"of $X$, then $X'$ is productively Lindel\"of. Next, recall that a perfect image of a $\sigma$-compact space is $\sigma$-compact. If then $X$ is not $\sigma$-compact, then neither is $X'$. Finally, the perfect pre-image of a completely metrizable space is completely metrizable.
\end{proof}


Hurewicz \cite{Hurewicz1925} proved that analytic (and, in particular, $G_\delta$) sets of reals are Menger if and only if they are $\sigma$-compact.  Thus, we see that, assuming a),  $0$-dimensional completely metrizable productively Lindel\"{o}f spaces are $\sigma$-compact. The non-$0$-dimensional case then follows.


Having established that a) implies c) we next prove that c) is equivalent to d). Since $\mathbb{P}$ is not $\sigma$-compact, it follows that if every completely metrizable productively Lindel\"of space is $\sigma$-compact, then there must be a Lindel\"of space $X$ with $X \times \mathbb{P}$ not Lindel\"of. Conversely, assume there is a completely metrizable productively Lindel\"of space $Y$ which is not $\sigma$-compact. Let $X$ be any Lindel\"of space. Claim: $X \times \mathbb{P}$ is Lindel\"of.  Recall Hurewicz's Theorem:

\begin{lem}[(\Kechrisi)]
If $Y$ is a completely metrizable Lindel\"{o}f space which is not $\sigma$-compact, then $Y$ includes a closed copy of $\mathbb{P}$.
\end{lem}

It follows that $X \times \mathbb{P}$ is a closed subspace of $X \times Y$.  Since $X \times Y$ is Lindel\"{o}f, so is $X \times \mathbb{P}$.\hfill\qed\newline\newline
Cardinal invariants of the continuum are closely related to Michael's problem.

\begin{defn}
Partially order $^\omega \omega$ by $f \leq\!\!\bigast\ g$ if $f(n) \leq g(n)$ for all but finitely many $n$. $\mathfrak{b}$ is the least cardinal such that for every $\mc{F} \subseteq {}^\omega\omega$ of size $< \mathfrak{b}$, there is a $g \in~^\omega\omega$ such that for each $f \in \mc{F}$, $f \leq\!\!\bigast\ g$.  $\mathfrak{d}$ is the least cardinal $\delta$ such that there is a family $\mc{F}$ of size $\delta$ included in $^\omega \omega$ such that for every $f \in\ ^\omega \omega$, there is a $g \in \mc{F}$, such that $f \leq\!\!\bigast\ g$. $\mathop{cov}(\mc{M})$ is the least cardinal $\delta$ such that $\ ^\omega \omega$, identified with $\mathbb{P}$, is the union of $\delta$ nowhere dense sets. A
  \textbf{$\lambda$-scale} is a subset $S$ of $^\omega\omega$ of size
  $\lambda$ such
  that $<\bigast$ (i.e., $\leq\bigast$, but not for all but finitely
  many $n$ equal) well-orders $S$ and each $f \in\ ^\omega\omega$ is
  less than some member of $S$.
\end{defn}

\begin{lem}[~\cite{vanDouwen1984}]
$\mathfrak{b} = \aleph_1$ implies there is a Lindel\"of regular space $X$ such that $X \times \mathbb{P}$ is not Lindel\"of.
\end{lem}

\begin{lem}[~\cite{Moore1999}]
$\mathfrak{d} = \mathop{cov}(\mc{M})$ implies there is a Lindel\"of regular space $X$ such that $X \times \mathbb{P}$ is not Lindel\"of.
\end{lem}

\begin{cor}\label{cor22}
$\mathfrak{b} = \aleph_1$ or $\mathfrak{d} = \mathop{cov}(\mathcal{M})$ implies every productively Lindel\"of, completely metrizable space is $\sigma$-compact.
\end{cor}

It is interesting to wonder whether there is a test space for whether productively Lindel\"of metrizable spaces are $\sigma$-compact, as is provided by $\mb{P}$ in the completely metrizable case. Under $CH$, by Lemma \ref{lem1} every productively Lindel\"of metrizable space is $\sigma$-compact. This is proved explicitly in \cite{Alster1987}. If there were a productively Lindel\"of, metrizable, non-$\sigma$-compact space, it would be an example of a productively Lindel\"of non-Alster space, and of a productively Lindel\"of, indestructibly Lindel\"of space which would not be indestructibly productively Lindel\"of.

In a previous version of this note, we claimed that $\mathfrak{d} = \aleph_1$ implies productively Lindel\"of metrizable spaces are $\sigma$-compact, improving the $CH$ result referred to above. The referee pointed out an error in our proof, which we have been unable to fix. We are no longer confident of the truth of our claim.

\begin{pbm}{}
Does $\mathfrak{d} = \aleph_1$ imply productively Lindel\"of metrizable spaces are $\sigma$-compact?
\end{pbm}

As a consolation prize, we shall prove a weaker assertion.

\begin{defn}
A \textbf{$\gamma$-cover} of a space is a countably infinite open cover such that each point is in all but finitely many members of the cover.  A space is \textbf{Hurewicz} if, given a sequence $\set{\mc{U}_n : n \in \omega}$ of $\gamma$-covers, there is for each $n$ a finite $\mc{V}_n \subseteq \mc{U}_n$ such that either $\set{\bigcup\mc{V}_n : n \in \omega}$ is a $\gamma$-cover, or else for some $n$, $\mc{V}_n$ is a cover.
\end{defn}

\begin{thm}
$\mathfrak{d} = \aleph_1$ implies every productively Lindel\"{o}f metrizable space is Hurewicz.
\end{thm}

The Hurewicz property fits strictly between Menger and $\sigma$-compact.  See e.g. \cite{Just1996}, \cite{Tsaban2008}, and \cite{Ts}.  In a successor \cite{Tall} to this paper, the second author proves that Alster implies Hurewicz.  There are a number of equivalent definitions of Hurewicz -- see \cite{Just1996}, \cite{Tsaban2008}, \cite{BZ}, \cite{Tall}.

It will be convenient to work with one of them.  To avoid relying on the unpublished \cite{Tall}, we shall temporarily call this property \textit{Hurewicz}$^*$:

\begin{defn}
A Lindel\"{o}f $T_3$ space is \textbf{Hurewicz$^*$} if and only if every \v{C}ech-complete $Y \supseteq X$ includes a $\sigma$-compact $Z \supseteq X$.
\end{defn}

Banakh and Zdomskyy \cite{BZ} prove that:

\begin{lem}\label{newlem25}
Hurewicz$^*$ is equivalent to Hurewicz in separable metrizable spaces.
\end{lem}

We generalized this to Lindel\"{o}f $T_3$ spaces in \cite{Tall}, but their version is all we need here.  We next observe:

\begin{lem}\label{newlem26}
A $T_{3\frac{1}{2}}$ perfect image of a Hurewicz$^*$ $T_{3\frac{1}{2}}$ space is Hurewicz$^*$.
\end{lem}

\begin{proof}
Let $p : X$ onto $X_0$ be perfect.  Let $Y_0$ be a \v{C}ech-complete space including $X_0$.  Then the closure $\overline{X_0}$ of $X_0$ in $Y_0$ is also \v{C}ech-complete.  Then $\beta\overline{X_0}$ is a compactification of $X_0$.  Recall:

\begin{lem}[(\Engelkingrefi)]
For every compactification $\alpha T$ of a $T_{3\frac{1}{2}}$ space $T$ and every continuous map $f : S \to T$ of a $T_{3\frac{1}{2}}$ space $S$ to the space $T$, there is a continuous extension $F: \beta S \to \alpha T$ over $\beta S$ and $\alpha T$.
\end{lem}

Thus we may extend $p : X \to X_0$ to $P : \beta X \to \beta \overline{X_0}$.  Let $Y = P^{-1}(\overline{X_0})$.  Then $Y$ is a \v{C}ech-complete space including $X$, since \v{C}ech-completeness is a perfect invariant for $T_{3\frac{1}{2}}$ spaces \cite{Engelking1989}.  Let $W$ be $\sigma$-compact, $X \subseteq W \subseteq Y$.  Then $P(W)$ is $\sigma$-compact, $X_0 \subseteq P(W) \subseteq Y_0$.
\end{proof}

By Lemmas \ref{lem18}, \ref{newlem25}, and \ref{newlem26}, and since it is easy to see that productive Lindel\"{o}fness is a perfect invariant, we may conclude that if there is a productively Lindel\"{o}f space which is not Hurewicz, there is one included in $\mathbb{P}$.  Furthermore, by the following result of Rec\l{}aw \cite{R}, we may further assume that there is a productively Lindel\"{o}f $X \subseteq \mathbb{P}$ such that $X$ is not included in any $\sigma$-compact subspace of $\mathbb{P}$.
\begin{lem}[~\cite{R}]\label{lem26}
A $0$-dimensional subset $X$ of $\mb{P}$ is Hurewicz if and only if every homeomorph of $X$ included in $\mb{P}$ is included in a $\sigma$-compact subspace of $\mb{P}$.
\end{lem}

Let $\{ f_\alpha : \alpha < \omega_1 \}$ be a dominating family for $^\omega\omega$, thinned out to form a scale. For each $\alpha < \omega_1$, let $x_\alpha \in X$ be such that $x_\alpha \not \le^\ast f_\beta$, for every $\beta < \alpha$. There always is such an $x_\alpha$, else $X$ would be included in a $\sigma$-compact subspace of $\mathbb{P}$. Considering $\mathbb{P}$ as a subspace of $[0,1]$, let $Y' = [0,1] - X$. Let $Y = Y' \cup \{x_\alpha : \alpha < \omega_1 \}$. Strengthen the topology on $Y$ by making all the $x_\alpha$'s isolated. Then claim $Y$ is still Lindel\"of. For if $V \supseteq Y'$ is open in $[0,1]$, then $[0,1] - V$ is compact in $[0,1]$ and included in $X$.
Then some $f_\alpha$ bounds it. Since the $f_\alpha$'s form a scale, none
of the $x_\beta$'s, for $\beta \ge \alpha$ are $\le^\ast f_\alpha$.
Therefore there are
only countably many $x_\alpha$'s in $[0,1] - V$. Then any open cover of $Y$
will include countably many open sets which cover all but countably many
members of $Y$. The usual argument shows that $X \times Y$ is not Lindel\"of,
since $\{\langle x_\alpha, x_\alpha \rangle: \alpha < \omega_1\}$ is uncountable closed discrete.
\end{proof}

\section{Other productive properties}

There are some other properties we may productively consider.

\begin{defn}
A space is \textit{powerfully Lindel\"of} if its $\omega$th power is Lindel\"of. A space is \textit{finitely powerfully Lindel\"of} if all of its finite powers are Lindel\"of.  (Finitely powerfully Lindel\"of spaces are called \textit{$\varepsilon$-spaces} in \cite{Gerlits1982}.)
\end{defn}

\begin{lem}[~\cite{Alster1988}]
Alster spaces are powerfully Lindel\"of.
\end{lem}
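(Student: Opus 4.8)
The plan is to prove the stronger statement that a countable product of Alster spaces is again Alster. Granting this, $X^{\omega}=\prod_{n<\omega}X$ is Alster whenever $X$ is, and since every Alster space is Lindel\"of (given an open cover, close it under finite unions; each compact set then lies in a single member, the Alster property yields a countable subcover of the enlarged family, and each of its members is a finite union of members of the original cover), $X^{\omega}$ is Lindel\"of, i.e.\ $X$ is powerfully Lindel\"of. Two elementary closure facts are used throughout: a finite union of $G_{\delta}$ sets is $G_{\delta}$, and a countable intersection of $G_{\delta}$ sets is $G_{\delta}$.

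I would first treat the finite case, showing that $X,Y$ Alster implies $X\times Y$ Alster; all finite powers $X^{(k)}=\prod_{n<k}X$ are then Alster by induction. Let $\mc{H}$ be a $G_{\delta}$ cover of $X\times Y$ covering each compact set finitely. Replacing $\mc{H}$ by the family of finite unions of its members (still $G_{\delta}$ sets), I may assume every compact set lies in a single member; a countable subcover of the enlarged family refines to one of $\mc{H}$. If a product of compacta $K\times L$ lies in a member $H=\bigcap_{m}O_{m}$, then the tube lemma applied in each $O_{m}$ surrounds $K\times L$ by open boxes $U_{m}\times V_{m}\subseteq O_{m}$; setting $G=\bigcap_{m}U_{m}$ and $L^{\sharp}=\bigcap_{m}V_{m}$ gives $G_{\delta}$ sets with $K\subseteq G$, $L\subseteq L^{\sharp}$, and $G\times L^{\sharp}\subseteq H$. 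Fixing a compact $L\subseteq Y$ and letting $K$ range over compacta of $X$, the sets $G(K,L)$ form a $G_{\delta}$ cover of $X$ covering compacta finitely; the Alster property of $X$ yields countably many $K_{i}$ with $\bigcup_{i}G(K_{i},L)=X$, and $L^{\sharp}(L):=\bigcap_{i}L^{\sharp}(K_{i},L)$ is then a $G_{\delta}$ set containing $L$ with $X\times L^{\sharp}(L)$ covered by the corresponding countably many members of $\mc{H}$. The sets $L^{\sharp}(L)$ form a $G_{\delta}$ cover of $Y$ covering compacta finitely, so a second application of the Alster property, now to $Y$, extracts countably many of them covering $Y$; the associated members of $\mc{H}$ cover $X\times Y$.

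For the countable power I would first reduce an arbitrary witnessing cover to a cover by boxes. Every compact subset of $\prod_{n}X_{n}$ sits inside a compact box $\prod_{n}K_{n}$ (Tychonoff applied to the projections), so ``covers compacta finitely'' is the same as ``covers compact boxes finitely''; after closing under finite unions I may assume each compact box lies in a single member $H=\bigcap_{m}O_{m}$. The generalized tube lemma for Tychonoff products surrounds the box by basic boxes $B_{m}\subseteq O_{m}$, and the coordinatewise intersection $\prod_{n}\bigcap_{m}U^{m}_{n}=\bigcap_{m}B_{m}$ is a $G_{\delta}$ box sandwiched between the compact box and $H$. Thus, without loss of generality, the witnessing cover $\mc{D}$ consists of $G_{\delta}$ boxes $D=\prod_{n}U^{D}_{n}$, each catching a compact box, and a countable subcover of $\mc{D}$ transfers back to the original.

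The remaining, and genuinely hard, step is to extract a countable subcover from this box cover, and this is where I expect the main obstacle. The natural move is a fusion over the coordinates: apply the Alster property of the finite subproduct $X^{(k)}$ to the induced $G_{\delta}$ cover $\{\prod_{n<k}U^{D}_{n}:D\in\mc{D}\}$ of $X^{(k)}$ to obtain, for each $k$, a countable subfamily handling the first $k$ coordinates, and then organize these choices along a tree to assemble a single countable subfamily. The difficulty is precisely that finite approximations need not assemble: a point may have each of its initial segments captured by some member while no single member catches all of its coordinates. Overcoming this requires a recursion in which the choice made at coordinate $k$ is kept compatible with those already made, and in which the hypothesis that every compact box lies in a single member is used to prune the tree and force eventual total capture. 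Carrying out this fusion carefully is the crux of the argument; the finite-productivity computation above is the template each stage imitates.
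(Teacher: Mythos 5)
The paper offers no proof of this lemma at all---it is imported verbatim from \cite{Alster1988}---so the only question is whether your argument actually establishes the statement, and it does not. Your finite-product step is correct and is essentially the standard tube-lemma argument: closing the $G_\delta$ cover of $X\times Y$ under finite unions, shrinking a member containing $K\times L$ to a $G_\delta$ box $G\times L^{\sharp}$, and applying the Alster property twice does show that a product of two (hence finitely many) Alster spaces is Alster. The reduction of the $\omega$-power problem to a cover by $G_\delta$ boxes via Wallace's theorem is also sound. But the lemma asserts that $X^{\omega}$ is Lindel\"of, and your treatment of the countable power stops exactly at the point where the theorem lives: you correctly observe that taking, for each $k$, a countable subfamily of $\mc{D}$ whose length-$k$ initial boxes cover $X^{(k)}$ and then uniting these families fails, because a point of $X^{\omega}$ can have every initial segment captured by some chosen box while no single chosen box contains all of its coordinates. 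You then describe the needed fusion only as a desideratum (``carrying out this fusion carefully is the crux of the argument'') without carrying it out. That fusion is not a routine detail; it is the substance of Alster's theorem that countable products of Alster spaces are Alster, and without it you have proved only that all \emph{finite} powers of $X$ are Lindel\"of, which is a strictly weaker conclusion (the paper itself cites Przymusi\'nski's example of a finitely powerfully Lindel\"of space that is not powerfully Lindel\"of, so the finite case cannot simply be ``passed to the limit'').

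Concretely, what is missing is a recursion that, at stage $k$, refines the countable family chosen at stage $k-1$ rather than starting afresh: one must arrange that the stage-$k$ choices are made relative to the residual part of the space not yet permanently covered, using the hypothesis that each compact box lies in a \emph{single} member of $\mc{D}$ to guarantee that every point is eventually captured in all coordinates by one fixed member. As written, your proposal identifies this obstacle accurately but does not overcome it, so the proof of the lemma is incomplete.
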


\begin{lem}[~\cite{Alster1988}]
Productively Lindelof spaces are finitely powerfully Lindelof.
\end{lem}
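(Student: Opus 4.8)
The plan is to argue by induction on $n$ that $X^n$ is Lindel\"of for every finite $n \geq 1$, which is exactly the assertion that $X$ is finitely powerfully Lindel\"of. The only ingredient needed is the definition of productive Lindel\"ofness itself, applied repeatedly with the previous power playing the role of the auxiliary Lindel\"of factor.

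For the base case $n = 1$, I would observe that $X$ is Lindel\"of: taking $Y$ to be a one-point space (trivially Lindel\"of) in the definition of productive Lindel\"ofness gives that $X \times \{\mathrm{pt\}} \cong X$ is Lindel\"of. (This is the same observation, already noted elsewhere in the paper, that every productively Lindel\"of space is Lindel\"of.) For the inductive step, suppose $X^n$ is Lindel\"of. Since $X$ is productively Lindel\"of and $X^n$ is a Lindel\"of space, the defining property applied with $Y = X^n$ yields that $X \times X^n = X^{n+1}$ is Lindel\"of. This closes the induction, so $X^n$ is Lindel\"of for all $n < \omega$.

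There is no genuine obstacle here: the whole content is that the productive Lindel\"of hypothesis is precisely what lets one multiply a known Lindel\"of power by one more copy of $X$ and stay Lindel\"of, and finitely powerfully Lindel\"of is the conclusion of iterating this finitely often. The one point worth stating carefully is that at each step the factor $X^n$ is itself Lindel\"of (supplied by the inductive hypothesis), so that it is a legitimate choice of $Y$ in the definition; no appeal to Alster-ness, weight bounds, or any of the selection principles is required, in contrast to the surrounding lemmas.
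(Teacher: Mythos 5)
Your proof is correct. The paper gives no argument for this lemma---it is simply cited to Alster's 1988 paper---and the intended proof is exactly the routine induction you give: $X\cong X\times\{\mathrm{pt}\}$ is Lindel\"of, and if $X^n$ is Lindel\"of then applying the defining property with $Y=X^n$ gives $X^{n+1}=X\times X^n$ Lindel\"of. Nothing further is needed.
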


Przymusi\'nski \cite{Przymusinski1980} has constructed a finitely powerfully Lindel\"of space that is not powerfully Lindel\"of. Michael \cite{Michael1971} constructed a subset $M$ of the real line and a Lindel\"of space such that the product of the two was not Lindel\"of. Thus $M$ is powerfully Lindel\"of but not productively Lindel\"of. He also proved:

\begin{lem}[~\cite{Michael1971}]
If $X^{\omega}$ is normal, then $X \times \mathbb{P}$ is normal.
\end{lem}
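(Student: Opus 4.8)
The plan is to fix disjoint closed sets $A$ and $B$ in $X \times \mathbb{P}$ and to construct a Urysohn separation, exploiting the identification $\mathbb{P} \cong \omega^\omega$ together with its tree structure; throughout I assume $X$ is $T_1$, as it is Tychonoff in the intended applications. First I would record the reduction that normality of $X^\omega$ forces every finite power $X^n$ to be normal: fixing $x_0 \in X$, the set of sequences in $X^\omega$ whose tail from the $n$-th coordinate on is constantly $x_0$ is closed (points are closed in $X$), it is homeomorphic to $X^n$, and closed subspaces of normal spaces are normal. For a finite sequence $s \in \omega^{<\omega}$ write $N_s = \{\sigma \in \omega^\omega : s \subseteq \sigma\}$; for each $n$ the family $\{N_s : s \in \omega^n\}$ is a partition of $\mathbb{P}$ into countably many clopen sets, and these partitions refine as $n$ grows. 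This gives a canonical sequence of clopen ``slab'' decompositions $X \times \mathbb{P} = \bigsqcup_{s \in \omega^n}(X \times N_s)$ of the product that approximate the fibres $X \times \{\sigma\}$.

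Next I would build the separating function level by level along the tree. The aim is to produce continuous maps $f_n : X \times \mathbb{P} \to [0,1]$ that are constant in the $\mathbb{P}$-variable on each slab $X \times N_s$ with $s \in \omega^n$, so that $f_n$ is effectively a function on the disjoint sum $\bigsqcup_{s \in \omega^n}(X \times \{s\})$, that is, on $X \times \omega^n$, and are chosen so that $f_n$ approximately separates the level-$n$ approximations of $A$ and $B$. At each level the required approximate separation is supplied slab by slab by the normality of the appropriate finite power $X^n$, with the values on $A$ driven toward $0$ and those on $B$ toward $1$. I would then force the $f_n$ to cohere with the refinement of the partitions and to converge uniformly, so that $f = \lim_n f_n$ is continuous; continuity across the infinitely-branching directions of $\omega^\omega$ is precisely the point at which the full countable power $X^\omega$, rather than merely the finite powers, is invoked.

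The main obstacle is exactly this passage to the limit. Because $\mathbb{P} = \omega^\omega$ is not $\sigma$-compact and is self-similar, $\mathbb{P} \cong \omega \times \mathbb{P}$, no naive induction on slabs terminates, and the decomposition $X \times \mathbb{P} = \bigsqcup_{k}(X \times N_{\langle k\rangle})$ merely reproduces the original problem; the content lies in controlling the construction uniformly along every branch at once, so that the limit stays continuous at all points $(x,\sigma)$ with $\sigma$ ranging over all of $\omega^\omega$. I expect the key quantitative device to be a summable schedule of tolerances $2^{-n}$ bounding how far $f_{n+1}$ may deviate from $f_n$ on each slab, combined with an appeal to the normality of $X^\omega$ itself to separate the two closed subsets of $X^\omega$ that arise as the coherent limits of the slab data. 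Once this uniform Cauchy condition is secured, the limit $f$ is a continuous function separating $A$ from $B$, and normality of $X \times \mathbb{P}$ follows.
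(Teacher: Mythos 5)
The paper does not prove this lemma --- it is quoted from Michael's 1971 paper --- so I am measuring your proposal against the known argument. Your sketch has a genuine gap at its centre, and in fact the gap is located exactly where the lemma's content lives. The level-$n$ step is not available as described: on a slab $X\times N_s$ you would need to separate $A\cap(X\times N_s)$ from $B\cap(X\times N_s)$ by a function constant in the $\mathbb{P}$-variable, i.e.\ by open sets in $X$ separating the projections of these traces to $X$; but those projections need not be disjoint and need not be closed, so normality of $X$ (or of $X^n$) gives you nothing slab by slab. You never say what the ``level-$n$ approximations of $A$ and $B$'' are, and the final appeal to ``the normality of $X^\omega$ itself to separate the two closed subsets of $X^\omega$ that arise as the coherent limits of the slab data'' names no subsets and no map into $X^\omega$. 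That this step cannot be routine is shown by examples in the very paper you are reading: there are spaces all of whose finite powers are Lindel\"of whose product with $\mathbb{P}$ is not (Przymusi\'nski), so any argument that extracts its separations from the finite powers $X^n$ and invokes $X^\omega$ only ``in the limit'' is missing the essential use of the countable power.

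The actual route is much more direct and uses no limiting construction. If $X$ contains a countably infinite closed discrete subspace $D$, then $D^\omega$ is a closed subspace of $X^\omega$ homeomorphic to $\omega^\omega=\mathbb{P}$, so $X\times\mathbb{P}\cong X\times D^\omega$ is a closed subspace of $X\times X^\omega\cong X^\omega$, and normality passes to closed subspaces. If $X$ contains no such subspace, then $X$ is countably compact, and it is normal (being homeomorphic to a closed subspace of $X^\omega$ via $x\mapsto(x,x_0,x_0,\dots)$); the product of a normal countably compact space with a metrizable space is normal, which settles this case. Notice how the hypothesis on $X^\omega$ is consumed: not to control a Cauchy sequence of Urysohn functions, but to transfer normality along a single closed embedding. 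If you want to salvage your approach you would have to produce that embedding anyway, at which point the tree machinery becomes unnecessary.
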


\noindent On the other hand,
\begin{lem}[~\cite{Rudin1975}]
Suppose $X$ is Lindel\"{o}f regular and $Y$ is separable metrizable.  Then $X \times Y$ is normal if and only if $X \times Y$ is Lindel\"{o}f.
\end{lem}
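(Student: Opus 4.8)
The statement has two directions, and I would dispose of the forward implication (Lindel\"of $\Rightarrow$ normal) first, since it is immediate: a product of two regular spaces is regular, so $X \times Y$ is regular, and a regular Lindel\"of space is normal. All the genuine work is in the converse, so the plan is to assume $X \times Y$ is normal and extract a countable subcover from an arbitrary open cover.

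For the setup I would exploit that $Y$ is separable metrizable by embedding it in a metrizable compactification $\widehat{Y}$ (say, the closure of $Y$ in the Hilbert cube), fixing a countable base $\{B_n\}_{n<\omega}$ of $\widehat{Y}$, and noting that $X \times \widehat{Y}$ is Lindel\"of, being the product of a Lindel\"of space with a compact one. The purpose of passing to $\widehat{Y}$ is to locate precisely where normality must act. If $Y$ happened to be $F_\sigma$ in $\widehat{Y}$ --- equivalently, $\sigma$-compact --- then $X \times Y$ would be an $F_\sigma$ subset of the Lindel\"of space $X \times \widehat{Y}$ and hence Lindel\"of, with no appeal to normality at all (this is just ``Lindel\"of times $\sigma$-compact is Lindel\"of''). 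So the whole difficulty lives in the non-$\sigma$-compact case, exactly Michael's-problem territory, and normality is what must substitute for $\sigma$-compactness.

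For the main argument I would reduce an arbitrary open cover to basic rectangles $U \times (B_n \cap Y)$ and, collecting for each $n$ the open set $W_n \subseteq X$ that is the union of the first coordinates occurring with $B_n$, obtain a single countable open cover $\{W_n \times (B_n \cap Y)\}_{n<\omega}$ of $X \times Y$. Were each $W_n$ Lindel\"of, I could replace it by countably many of the original first coordinates and finish; the obstruction is that an open subspace of the Lindel\"of space $X$ need not be Lindel\"of. This is the one point where normality enters: I would use it to shrink the countable cover $\{W_n \times (B_n \cap Y)\}$ and, working inside $X \times \widehat{Y}$, where the fibres $\overline{B_n}$ are compact, pull the covering information back to countably many rectangles. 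Equivalently, and perhaps more cleanly, I would argue by contradiction through the closed-set (countable-intersection-property) characterization of Lindel\"ofness: a family of relatively closed subsets of $X \times Y$ with empty intersection but the countable intersection property would, on taking closures in the Lindel\"of space $X \times \widehat{Y}$, have its total intersection concentrated in the remainder $X \times (\widehat{Y}\setminus Y)$, and one then uses the metric on $\widehat{Y}$ to manufacture, near such a remainder point, two disjoint relatively closed subsets of $X \times Y$ that normality cannot separate.

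I expect this last transfer across the remainder $\widehat{Y}\setminus Y$ to be the main obstacle, since it is exactly the step converting the purely set-theoretic failure of $Y$ to be $\sigma$-compact into a concrete separation problem that normality can resolve; it is also, in contrapositive form, essentially how Michael's examples are shown to be non-normal. The surrounding bookkeeping --- in particular, attention to countable paracompactness when shrinking the countable rectangle cover --- is where a full proof would demand care.
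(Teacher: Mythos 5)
The paper offers no proof of this lemma at all --- it is quoted from \cite{Rudin1975} --- so there is no argument of the authors' to compare yours against, and your proposal must be judged on its own merits. The easy direction is fine: $X \times Y$ is regular, and regular Lindel\"of spaces are normal. Your setup for the hard direction is also the standard one: embed $Y$ in a metrizable compactification $\widehat{Y}$, reduce an arbitrary open cover to the countable cover $\{W_n \times (B_n \cap Y)\}_{n<\omega}$, and observe that the whole difficulty is that the open sets $W_n \subseteq X$ need not be Lindel\"of, whereas an $F_\sigma$ subset of $X$ would be.

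But the proposal stops exactly where the theorem starts: neither of the two ways you suggest for bringing in normality is carried out, and each faces an obstruction you name without resolving. On the shrinking route, normality alone does not let you shrink a countable open cover to a closed cover --- that is equivalent to normal plus countably paracompact, and excluding Dowker-type behaviour of a normal product with a metric factor is itself a substantial theorem of Rudin and Starbird, not ``bookkeeping''; moreover, even a closed shrinking $F_n \subseteq W_n \times (B_n\cap Y)$ is only closed in the space you are trying to prove Lindel\"of, so it is not obviously coverable by countably many members of the original cover. On the contradiction route, to violate normality you must actually exhibit two disjoint closed subsets of $X \times Y$ that cannot be separated; manufacturing these from the bare failure of Lindel\"ofness --- rather than from a concrete concentrated or Bernstein set, as in Michael's examples --- is the entire content of the proof, and the observation that a witnessing closed family ``concentrates in the remainder $\widehat{Y}\setminus Y$'' does not yet produce the uncountable discrete structure or the inseparable pair that the separation argument needs. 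As written, this is a correct map of where a proof must go, not a proof.
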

It follows that:
\begin{thm}
If $X$ is regular and powerfully Lindel\"of, then $X \times \mathbb{P}$ is Lindel\"of.
\end{thm}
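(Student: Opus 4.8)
The plan is to chain together the two preceding lemmas --- Michael's and Rudin's --- using the single additional ingredient that a regular Lindel\"of space is normal. First I would note that since $X$ is powerfully Lindel\"of, $X^\omega$ is Lindel\"of; and since $X$ is regular, the product $X^\omega$ is regular as well. As every regular Lindel\"of space is normal, $X^\omega$ is normal.

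Next I would apply Michael's Lemma directly: because $X^\omega$ is normal, $X \times \mathbb{P}$ is normal. This is the step where the full strength of ``powerfully Lindel\"of'' is consumed, since it is precisely the normality of the countable power that triggers the hypothesis of that lemma.

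To finish, I would invoke Rudin's Lemma, which requires $X$ to be Lindel\"of regular and $\mathbb{P}$ to be separable metrizable. The space $X$ is Lindel\"of because it is a continuous image of the Lindel\"of space $X^\omega$ under a coordinate projection, and it is regular by hypothesis; and $\mathbb{P} \cong \omega^\omega$ is certainly separable metrizable. Rudin's Lemma then asserts that for such $X$ and $\mathbb{P}$, the product $X \times \mathbb{P}$ is normal if and only if it is Lindel\"of. Since we have just shown $X \times \mathbb{P}$ is normal, it follows that $X \times \mathbb{P}$ is Lindel\"of, completing the argument.

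The proof is a short two-step deduction with no genuine computational difficulty. The only point demanding care is the inference that $X^\omega$ is normal: this is exactly where the hypothesis ``powerfully Lindel\"of'' is needed rather than merely ``Lindel\"of,'' since a plain Lindel\"of regular $X$ need not have $X^\omega$ normal, and Michael's Lemma would then be unavailable.
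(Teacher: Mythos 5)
Your proof is correct and is exactly the argument the paper intends (the paper simply writes ``It follows that'' after stating Michael's and Rudin's lemmas): $X^\omega$ is regular Lindel\"of hence normal, Michael's lemma gives $X\times\mathbb{P}$ normal, and Rudin's lemma converts that to Lindel\"of. No differences worth noting.
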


Michael also raised the following question (the earliest reference we have found is \cite{Przymusinski1984}), which is still unsolved:

{\it
\begin{enumerate}
\item[5.] Are productively Lindel\"of spaces powerfully Lindel\"of?
\end{enumerate}
}

We can give a partial answer:

\begin{defn}
A space is \textit{productively $FC$-Lindel\"of} if its product with every first countable Lindel\"of $T_3$ space is Lindel\"of.
\end{defn}

\begin{thm}
$CH$ implies if $X$ is first countable $T_3$, and productively $FC$-Lindel\"of, then $X$ is Alster, and hence $X$ is powerfully Lindel\"of.
\end{thm}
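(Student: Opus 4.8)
The plan is to show that under $CH$, a first countable $T_3$ productively $FC$-Lindel\"of space $X$ is Alster, by invoking the characterization from Lemma \ref{lem1}. Recall that Lemma \ref{lem1} asserts that $CH$ implies every productively Lindel\"of $T_3$ space of weight $\leq \aleph_1$ is Alster. Since $X$ is first countable, Lindel\"of, and Hausdorff, it has cardinality (and hence weight) at most $2^{\aleph_0}$, which equals $\aleph_1$ under $CH$; this is exactly the weight bound established in the proof of Corollary \ref{cor5}. So the weight hypothesis of Lemma \ref{lem1} is satisfied for free. The whole difficulty therefore concentrates on upgrading ``productively $FC$-Lindel\"of'' to the genuine ``productively Lindel\"of'' that Lemma \ref{lem1} demands.

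**First I would** make precise what must be shown: that $X \times Y$ is Lindel\"of for \emph{every} Lindel\"of space $Y$, given only that this holds when $Y$ is first countable $T_3$. The natural strategy is a reduction. Since $X$ has weight $\leq \aleph_1$ and is first countable, I would try to reduce an arbitrary Lindel\"of $Y$ to a first countable one through a continuous map, exploiting that $X$ itself is ``small'' and ``first countable''. Concretely, one expects that in computing coverings of $X \times Y$, only a first-countable ``trace'' of $Y$ can interact with the first-countable factor $X$: given an open cover of $X \times Y$, basic open boxes use a countable neighbourhood base at each point of $X$, so one hopes to push the relevant part of $Y$ through a perfect or quotient map onto a first countable Lindel\"of image, apply the $FC$-hypothesis there, and pull the countable subcover back. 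Once productive Lindel\"ofness (against all Lindel\"of spaces) is in hand, Lemma \ref{lem1} delivers Alster directly, and then the final clause follows because Alster spaces are powerfully Lindel\"of, as recorded in the penultimate section.

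**The hard part will be** the reduction step itself: turning productive $FC$-Lindel\"ofness into full productive Lindel\"ofness. This is where $CH$ and first countability must do real work together, because in general the two notions differ. I anticipate the key device is a cardinality or network-weight bound: first countable $T_3$ Lindel\"of spaces have size $\leq 2^{\aleph_0} = \aleph_1$, and one leverages this to replace an arbitrary Lindel\"of $Y$ by a first countable Lindel\"of space carrying the ``same'' obstruction to a countable subcover of $X \times Y$. A clean route is to argue contrapositively: if $X \times Y$ failed to be Lindel\"of for some Lindel\"of $Y$, extract from a witnessing open cover and an uncountable ``bad'' configuration a first countable Lindel\"of $T_3$ space $Y'$ (built from countably many neighbourhoods in $X$ at each of $\aleph_1$-many relevant points) with $X \times Y'$ not Lindel\"of, contradicting the hypothesis. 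Managing this construction so that $Y'$ is genuinely first countable, Lindel\"of, and $T_3$ — rather than merely a subspace or quotient that loses one of these properties — is the delicate technical obstacle, and it is precisely there that I expect the author to deploy a specific combinatorial lemma about first countable Lindel\"of spaces under $CH$.
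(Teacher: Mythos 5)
There is a genuine gap, and it lies exactly where you located the difficulty. Your plan routes through establishing that $X$ is fully productively Lindel\"of (against \emph{all} Lindel\"of spaces) so that Lemma \ref{lem1} can be applied, and the load-bearing step is the reduction: from an arbitrary Lindel\"of $Y$ with $X \times Y$ not Lindel\"of, manufacture a \emph{first countable} Lindel\"of $T_3$ space $Y'$ with $X \times Y'$ not Lindel\"of. You do not carry this out, and no argument of this kind is available: there is no reason an arbitrary witness $Y$ to non-productivity should admit a first countable replacement, and upgrading ``productively $FC$-Lindel\"of'' to ``productively Lindel\"of'' is not something the paper establishes or needs. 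As written, the proposal proves nothing beyond restating the hypothesis plus the (correct, but easy) weight bound.

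The paper's actual argument sidesteps the reduction entirely by taking the contrapositive of \emph{Alsterness} rather than of productive Lindel\"ofness. Under $CH$, given a non-Alster Lindel\"of $T_3$ space $X$ of weight $\leq \aleph_1$, Alster's construction (the one underlying Lemma \ref{lem1}) produces a specific Lindel\"of space $Y' = P \cup A$ with $X \times Y'$ not Lindel\"of, where $P$ consists of isolated points and each point of $A$ has a countable neighbourhood base inherited from a superspace; hence $Y'$ is first countable. So if $X$ were not Alster, this $Y'$ would already contradict productive $FC$-Lindel\"ofness. The ``specific combinatorial lemma'' you anticipated is thus not a lemma about reducing arbitrary Lindel\"of partners to first countable ones, but an inspection of the witness built from a bad $G_\delta$ cover of $X$ itself. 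Your instinct to argue contrapositively and to look for a first countable witness was right; the target of the contrapositive needs to be ``$X$ is Alster,'' not ``$X$ is productively Lindel\"of.'' The final clause then follows as you said, since Alster spaces are powerfully Lindel\"of.
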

\begin{proof}
Note that $X$ is Lindel\"of and hence has weight $\leq 2^{\aleph_0}$.  Assuming $CH$, given a non-Alster Lindel\"of $T_3$ space $X$ of weight $\leq \aleph_1$, Alster \cite{Alster1988} constructs a space $Y' = P \cup A$ such that:
\begin{enumerate}
\item $Y'$ is Lindel\"of,
\item $X \times Y'$ is not Lindel\"of,
\item $P$ is a set of isolated points,
\item $Y'$ is a subspace of a space $Y$ in which each $a \in A$ has a countable neighbourhood base.
\end{enumerate}

But then $Y'$ is first countable.
\end{proof}

An example of a finitely powerfully Lindel\"of space whose product with $\mathbb{P}$ is not Lindel\"of can be constructed by Michael's original construction. Recall that construction produces from $CH$ an uncountable subset $C$ of $\mathbb{R}$ concentrated on the rationals. $C$, as a subspace of the Michael line, is Lindel\"of, yet $C \times \mathbb{P}$ is not. We simply need such a $C$ with $C^n$ Lindel\"of for every $n$. Michael in fact constructs such a $C$ and hence such a space from $CH$ in \cite{Michael1971}.  In an earlier version of this note, we claimed we could get this from $\mathfrak{b} = \aleph_1$, but B. Tsaban found an error in the proof, so this remains open.

\section{The Rothberger property and\\ concentrated sets}

There are some more points concerning the Rothberger property worth noting.

\begin{defn}
We say that a topological space is \textit{concentrated} on $Y \subseteq X$ if, for every open set $U$ such that $U \supseteq Y$, $X \smallsetminus U$ is countable.
\end{defn}

\begin{thm}[folklore]
If $X$ is concentrated on a Rothberger (Menger) subspace, then $X$ is Rothberger (Menger).
\end{thm}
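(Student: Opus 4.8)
The plan is to exploit the fact that the Rothberger and Menger selection principles each let us cover the ``core'' $Y$ using only an infinite subsequence of the given covers, leaving the remaining infinitely many covers free to mop up the countable residue guaranteed by concentration. I would treat both properties in parallel, since the argument is identical apart from which selection principle is invoked on $Y$.

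First I would split $\omega$ into two disjoint infinite sets, say the evens $A$ and the odds $B$. Given a sequence $\set{\mc{U}_n : n < \omega}$ of open covers of $X$, I would restrict the covers indexed by $A$ to $Y$: since each $\mc{U}_n$ covers $X \supseteq Y$, the family $\set{U \cap Y : U \in \mc{U}_n}$ is an open cover of $Y$. Reindexing $A$ by $\omega$ and applying the Rothberger (resp.\ Menger) property of $Y$, I obtain a selection $U_n \in \mc{U}_n$ for each $n \in A$ such that $\set{U_n \cap Y : n \in A}$ covers $Y$; equivalently, $W := \bigcup_{n \in A} U_n$ is an open subset of $X$ containing $Y$.

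Next, concentration enters: because $X$ is concentrated on $Y$ and $W \supseteq Y$ is open, the complement $X \setminus W$ is countable, say $X \setminus W = \set{x_k : k < \omega}$ (this enumeration may terminate). Enumerating $B = \set{n_0 < n_1 < \cdots}$, for each $k$ I would pick $U_{n_k} \in \mc{U}_{n_k}$ with $x_k \in U_{n_k}$, which is possible since $\mc{U}_{n_k}$ covers $X$; for any indices of $B$ left unused (when the residue is finite) I simply select an arbitrary member of the corresponding cover, which is harmless. Then $\set{U_n : n \in A} \cup \set{U_{n_k} : k < \omega}$ covers $Y \cup (X \setminus W) = X$, and I have chosen exactly one $U_n \in \mc{U}_n$ for every $n < \omega$, witnessing that $X$ is Rothberger (resp.\ Menger).

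There is no deep obstacle here --- this is genuinely folklore --- so the work is almost entirely bookkeeping. The only point requiring a moment's care is the Menger case, where the covers are assumed closed under finite unions: I must check that this closure survives restriction to $Y$, which it does, since $(U \cup V) \cap Y = (U \cap Y) \cup (V \cap Y)$, so the selection principle for $Y$ applies verbatim. Beyond that, one must simply remember to assign a set to \emph{every} index of $B$, not merely those needed for the residue, so that the final output is a genuine selection indexed by all of $\omega$.
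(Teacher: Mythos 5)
Your proof is correct and follows essentially the same route as the paper's: split the covers into two infinite subfamilies, use the selection principle on $Y$ with one half, then use concentration to reduce the leftover to a countable set handled by the other half. The paper's version uses evens and odds exactly as you do, and likewise dismisses the Menger case as analogous.
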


\begin{proof}
We will prove the Rothberger case; the Menger case is analogous. Note that $X$ is Lindel\"of. Let $(\mc{U}_n)_{n \in \omega}$ be a sequence of open coverings for $X$. Let $Y$ be a Rothberger subspace such that $X$ is concentrated on it. Let $(U_{2n})_{n \in \omega}$ be a covering for $Y$ such that each $U_{2n} \in \mc{U}_{2n}$. Let $\{x_n: n \in \omega\} = X \smallsetminus \bigcup_{n \in \omega} U_{2n}$. For each $n \in \omega$, pick $U_{2n + 1} \in \mc{U}_{2n + 1}$ such that $x_n \in U_{2n + 1}$. Note that $(U_n)_{n \in \omega}$ is a covering for $X$.
\end{proof}

\begin{defn}
A space is \textit{Lusin} if every nowhere dense set is countable.
\end{defn}

\begin{cor}
Every separable Lusin space is Rothberger and, therefore, $D$.
\end{cor}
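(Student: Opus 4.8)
The plan is to exhibit a countable subspace on which $X$ is concentrated and then invoke the preceding folklore theorem together with the implications Rothberger $\Rightarrow$ Menger $\Rightarrow D$. Since $X$ is separable, I would fix a countable dense set $D \subseteq X$. The first step is to show that $X$ is concentrated on $D$. Let $U$ be any open set with $U \supseteq D$. Then $U$ is dense, since it contains the dense set $D$, so its complement $X \smallsetminus U$ is a closed set with empty interior, i.e.\ nowhere dense. Because $X$ is Lusin, every nowhere dense set is countable, so $X \smallsetminus U$ is countable. As $U$ was an arbitrary open superset of $D$, this is precisely the assertion that $X$ is concentrated on $D$.

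Next I would observe that the subspace $D$ is Rothberger, simply because it is countable: enumerating $D = \{d_n : n \in \omega\}$ and given any sequence $\{\mc{U}_n\}_{n<\omega}$ of open covers of $D$, one chooses for each $n$ some $U_n \in \mc{U}_n$ with $d_n \in U_n$; then $\{U_n : n < \omega\}$ covers $D$. Hence $X$ is concentrated on a Rothberger subspace, and the folklore theorem (that concentration on a Rothberger subspace forces the whole space to be Rothberger) yields that $X$ is Rothberger.

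Finally, the ``therefore, $D$'' clause is immediate: every Rothberger space is Menger, as noted just after the definition of those properties, and every Menger space is $D$ by Lemma~\ref{lem2}. Thus $X$ is $D$, completing the argument.

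I do not anticipate any genuine obstacle here; the only point requiring a moment of care is the verification that $X \smallsetminus U$ is nowhere dense rather than merely closed, which rests on $U$ being dense (a closed set with empty interior is nowhere dense). Everything else is a direct application of results already established in the excerpt.
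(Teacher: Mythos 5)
Your proof is correct and follows exactly the route the paper intends: it observes that a separable Lusin space is concentrated on a countable (dense) subset and then applies the preceding folklore theorem together with Rothberger $\Rightarrow$ Menger $\Rightarrow D$. You have simply filled in the details (the complement of an open superset of a dense set is nowhere dense, hence countable) that the paper's one-line proof leaves implicit.
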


\begin{proof}
Observe that a separable Lusin space is concentrated on a countable set.
\end{proof}

Separability cannot be dispensed with. A \textit{Sierpi\'nski set} is an uncountable set of reals which has countable intersection with every null set. Sierpi\'nski sets exist under $CH$; they are Lusin in the density topology on the real line, indeed the null sets coincide with the first category sets -- see \cite{Tall1976} for details. Rothberger sets have (strong) measure 0 \cite{Miller1984},  so Sierpi\'nski sets cannot be Rothberger in the usual topology on the real line and hence not in any strengthening of that.

Michael's space is concentrated on the rationals, so is Rothberger. Thus it is consistent that a Rothberger space (therefore a $D$-space) need not be productively Lindel\"of even if the products are taken only with well-behaved spaces such as $\mathbb{P}$.

\section{Elementary submodels}

A similar argument to the forcing one following Corollary \ref{cor13} shows that elementary submodels do not preserve $D$ or Lindel\"of.  Let $M$ be a countably closed elementary submodel of some $H_\theta$,  where $\theta$ is a sufficiently large regular cardinal, with the compact space $X$ in $M$.  Then the space $X_M$ defined in \cite{Junqueira1998}, namely the topology on $X \cap M$ generated by $\{U\cap M: U \in M \text{ and $U$ open in } X\}$ is countably compact~\cite{Junqueira1998}.  On the other hand, if we take $X$ to be e.g. $2^{2^{\aleph_0}}$ and $|M|=2^{\aleph_0}$, $X_M$ will not be compact \cite{Junqueira1998}, and hence not $D$ nor Lindel\"of.

However, we have:

\begin{thm}
If $X$ is a first countable $T_2$ $D$-space, and $M$ is a countably closed elementary submodel containing $X$, then $X_M$ is $D$.
\end{thm}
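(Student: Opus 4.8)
The plan is to prove that $X_M$ is a $D$-space by taking an arbitrary neighbourhood assignment on $X_M$, pulling the relevant data up into the elementary submodel $M$, using the $D$-property of $X$ inside $M$ to obtain a closed discrete kernel there, and then reflecting that kernel back down to $X_M$. Let me think carefully about the structure before committing to it.

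Let me recall the setup. We have $X$ a first countable $T_2$ $D$-space, $M$ a countably closed elementary submodel of $H_\theta$ with $X \in M$. The space $X_M$ has underlying set $X \cap M$, with topology generated by $\{U \cap M : U \in M, U \text{ open in } X\}$. I want to show $X_M$ is a $D$-space.

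So let $\{W_x\}_{x \in X_M}$ be a neighbourhood assignment on $X_M$. Each $x \in X \cap M$, and $W_x$ is a basic (or general) open set in $X_M$ containing $x$. A basic open set is $U \cap M$ for some open $U \in M$. Since $X$ is first countable, each point $x \in M$ has a countable neighbourhood base, and by elementarity (since $x \in M$ and the countable base is definable), that countable base is in $M$ and its elements are in $M$ (as $M$ is... well, countable sequences — $M$ being countably closed means $\omega$-sequences of elements of $M$ are in $M$). So for each $x \in X \cap M$ we can find an open $U_x \in M$ with $x \in U_x$ and $U_x \cap M \subseteq W_x$ — actually I want to arrange that the assignment $x \mapsto U_x$ is, in some sense, the reflection of a neighbourhood assignment on $X$.

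Here is the idea I want to execute. For each $x \in X \cap M$, choose (in $M$, using first countability and elementarity) an open $U_x \in M$ with $x \in U_x \cap M \subseteq W_x$. Now the function $x \mapsto U_x$ is defined on $X \cap M$. I want to extend this to a genuine neighbourhood assignment on all of $X$. The natural move: by elementarity and countable closure, consider that for each $x \in X \cap M$ the set $U_x$ is in $M$; I want to claim the assignment $x \mapsto U_x$ (for $x \in X \cap M$) is itself an element of $M$, or at least can be replaced by one. This is where first countability and countable tightness/closure matter. If $X_M$ has a point-countable or ccc structure I could run a counting argument. The cleaner route: let me think about what neighbourhood assignment to feed to the $D$-space $X$. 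Since $X$ is a $D$-space and $X \in M$, "$X$ is a $D$-space" holds in $M$ by elementarity.

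The core plan. Consider the neighbourhood assignment $\{W_x\}_{x \in X_M}$ on $X_M$. For each $x \in X\cap M$ pick $U_x\in M$ open in $X$ with $x\in U_x\cap M\subseteq W_x$, possible by first countability and elementarity. Now define, inside $M$, a neighbourhood assignment on $X$: the point is that the family $\{U_x : x \in X \cap M\}$ should be coded by an element of $M$. I would argue that because $X$ is first countable and $M$ is countably closed, there is a function $g \in M$ assigning to each point of $X$ an open neighbourhood, such that $g(x) \cap M = U_x \cap M$ for $x \in X \cap M$ — or more practically, I restrict attention and work with the assignment $g \restriction (X \cap M)$. The obstacle is precisely capturing the assignment as a single object in $M$, since a priori the $W_x$ are chosen arbitrarily and the whole function need not be in $M$. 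I expect the resolution to use that first-countable $D$-spaces reflect well: one reflects the kernel rather than the assignment. Specifically, I would use $M \prec H_\theta$ to find, for each $x\in X\cap M$, that the existence of a suitable neighbourhood and kernel is expressible; then apply the $D$-property of $X$ (which holds in $M$) to some assignment that $M$ can see, yielding a closed discrete kernel $Y \in M$ with $Y \subseteq X \cap M$ (since $M$ reflects, and $Y$ countable or at least $Y \subseteq M$), and then verify $Y$ is closed discrete in $X_M$ and that $\{W_x\}_{x \in Y}$ covers $X_M = X \cap M$.

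Now let me sketch the verification steps in order, which I believe is the right skeleton:

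\begin{enumerate}
\item For each $x \in X \cap M$, use first countability of $X$ and elementarity to pick an open $U_x \in M$ with $x \in U_x \cap M \subseteq W_x$.
\item Produce, inside $M$, a neighbourhood assignment $V$ on $X$ (a function $V : X \to \tau_X$ with $x \in V(x)$, $V \in M$) that agrees suitably with the $U_x$'s on $X \cap M$; the key claim to establish is that such a $V \in M$ exists with $V(x) \cap M \subseteq W_x$ for all $x \in X \cap M$. This is the delicate reflection step.
\item Since $X$ is a $D$-space and $X, V \in M$, the statement ``there is a closed discrete $Y \subseteq X$ with $\{V(x)\}_{x \in Y}$ covering $X$'' holds in $H_\theta$, hence in $M$ by elementarity; fix such a $Y \in M$.
\item Show $Y \cap M \subseteq X \cap M$ is the desired kernel for $\{W_x\}_{x \in X_M}$: first, $Y \cap M$ is closed discrete in $X_M$ (closedness and discreteness of $Y$ in $X$ relativize to $X_M$ via the definition of its topology, using that $Y \in M$); second, $\{W_x\}_{x \in Y \cap M}$ covers $X \cap M$, because any $z \in X \cap M$ lies in some $V(x) \cap M$ with $x \in Y$, elementarity forces such a witness $x$ to be found in $M$, and $V(x) \cap M \subseteq W_x$.
\end{enumerate}

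The main obstacle is Step 2: guaranteeing the neighbourhood assignment used in the submodel is genuinely an element of $M$ while still dominating the arbitrary external assignment $\{W_x\}$. I expect to overcome it by exploiting first countability to reduce each $W_x$ to a choice from a fixed countable base $\mathcal{B} \in M$ (so that the coding of the assignment becomes a function from $X \cap M$ into the countable, $M$-definable object $\mathcal{B}$), together with countable closure of $M$ to absorb the relevant countable amounts of information; the $T_2$ hypothesis will be used to ensure discreteness is preserved when passing between $X$ and $X_M$. Once Step 2 is secured, Steps 3 and 4 are elementarity bookkeeping of the kind already used in the forcing arguments earlier in the paper.
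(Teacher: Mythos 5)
Your Step 2 is not a ``delicate reflection step'' that you can defer --- under your approach it is the entire content of the theorem, and the sketch you give for it does not work. The neighbourhood assignment $\{W_x\}_{x\in X_M}$ is an arbitrary external object: it is a function whose domain $X\cap M$ is in general not an element of $M$ and whose values are chosen with no reference to $M$. First countability gives each point a countable local base (which, for $x\in M$, can be taken inside $M$), but it does not give a fixed countable base $\mathcal{B}\in M$ for the whole space --- that would be second countability --- so there is no countable $M$-definable object into which the assignment can be coded. Countable closure of $M$ absorbs $\omega$-sequences of elements of $M$, but $X\cap M$ is typically uncountable, so it cannot absorb a function indexed by $X\cap M$. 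Consequently there is no reason for a $V\in M$ as in your Step 2 to exist, and Steps 3 and 4 (which need $V,Y\in M$ to run elementarity) collapse with it.

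The paper's proof is entirely different and avoids this obstacle: it quotes the theorem of Junqueira and Tall that when $X$ is first countable and $T_2$ and $M$ is countably closed, $X_M$ is a \emph{closed subspace} of $X$ (in particular the $X_M$-topology is the subspace topology), and then observes that closed subspaces of $D$-spaces are $D$. The latter is elementary: extend a neighbourhood assignment on the closed set $F$ to one on $X$ by assigning $X\smallsetminus F$ to points outside $F$, take a kernel in $X$, and intersect it with $F$. The hypotheses ``first countable,'' ``$T_2$,'' and ``countably closed'' are used precisely to invoke the closed-subspace result, not to reflect neighbourhood assignments into $M$. If you want to salvage your argument you would need to replace Step 2 by this structural fact, at which point the reflection machinery becomes unnecessary.
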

\begin{proof}
In such a situation, $X_M$ is a closed subspace of $X$~\cite{Junqueira1998}.  Closed subspaces of $D$-spaces are easily seen to be $D$-spaces.
\end{proof}

\begin{thm}
If $X$ is $T_2$ and of pointwise countable type and hereditarily $D$, then $X_M$ is $D$.
\end{thm}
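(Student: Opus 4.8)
The plan is to follow the template of the preceding theorem, where first countability forced $X_M$ to sit inside $X$ as a \emph{closed} subspace, so that ``closed subspaces of $D$-spaces are $D$'' finished the job. Pointwise countable type is the natural weakening under which $X_M$ need no longer embed in $X$ at all, but is instead a \emph{perfect image} of a subspace of $X$. Accordingly, the first and main step is to invoke the analysis of $X_M$ for such spaces from \cite{Junqueira1998}: since every point of $X$ lies in a compact set of countable character and $M$ is countably closed, one produces a subspace $S\subseteq X$ together with a perfect map $g$ of $S$ onto $X_M$. The compact sets of countable character through the points of $X\cap M$, together with their countable outer bases (which lie in $M$ by countable closedness), are exactly what is needed both to build $S$ and to check that $g$ is continuous with compact fibres and closed onto the coarser topology of $X_M$. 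Note that this specializes correctly to the first countable case, where $S=X_M$ and $g$ is the identity.

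Granting this, the rest is routine and is where the hypotheses are spent. First, $S$ is a subspace of the hereditarily $D$-space $X$, hence is itself a $D$-space. Second, I would record the folklore fact that a perfect image of a $D$-space is $D$: given a neighbourhood assignment $\{V_t\}_{t\in X_M}$ on $X_M$, pull it back along $g$ to the assignment $s\mapsto g^{-1}(V_{g(s)})$ on $S$, take a closed discrete kernel $E$ there, and push forward to $g(E)$. Coverage of $X_M$ by $\{V_t\}_{t\in g(E)}$ is immediate from surjectivity of $g$; and $g(E)$ is closed because $g$ is closed, while it is discrete because $g$ has compact fibres (each fibre meets the closed discrete set $E$ finitely, so closedness of $g$ isolates each point of $g(E)$). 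Since $X_M$ is $T_1$, this exhibits $g(E)$ as a closed discrete kernel, so $X_M$ is $D$.

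The hard part will be the structural step, that $X_M$ is a perfect image of a subspace of $X$; this is what I would draw from \cite{Junqueira1998}, and it is where both hypotheses genuinely interact. Pointwise countable type supplies the compact fibres of $g$, and countable closedness of $M$ guarantees that the relevant countable outer bases are captured by $M$, so that the $X_M$-topology is exactly the quotient topology induced by $g$. It is essential that $g$ emanate from an \emph{honest subspace} of $X$, so that hereditary $D$-ness can be applied: one cannot instead argue by mapping $X_M$ downward onto a first countable reflection and reasoning fibrewise, because the fibres arising that way are submodels $(K)_M$ of compact sets $K$ of countable character, and such a submodel is only countably compact and can fail to be compact (as the $2^{2^{\aleph_0}}$ example recalled earlier shows), hence can fail to be $D$. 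It is precisely the passage to a subspace $S$ of $X$, where hereditary $D$-ness is available, that circumvents this obstruction.
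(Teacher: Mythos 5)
Your proposal is correct and follows essentially the same route as the paper: the paper likewise quotes from \cite{Junqueira1998} that for $T_2$ spaces of pointwise countable type $X_M$ is a perfect image of a subspace of $X$, applies hereditary $D$-ness to that subspace, and cites \cite{Borges1991} for the fact that perfect images of $D$-spaces are $D$ (which you instead prove by the standard pull-back/push-forward argument). The only difference is that you sketch the two ingredients rather than citing them, which adds detail but no new ideas.
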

\begin{proof}
By \cite{Junqueira1998}, if $X$ is $T_2$ and of pointwise countable type, then $X_M$ is a perfect image of a subspace of $X$.  By \cite{Borges1991}, perfect images of $D$-spaces are $D$.
\end{proof}

Alster \cite{Alster1988} asked whether if $CH$ holds and every closed subspace of $X$ of weight $\leq \aleph_1$ is Alster, then $X$ must be Alster.  We can prove this for first countable spaces:

\begin{thm}
Suppose $X$ is first countable $T_2$ and each closed subspace of $X$ of weight $\leq 2^{\aleph_0}$ is Alster. Then $X$ is Alster.
\end{thm}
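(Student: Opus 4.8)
The plan is to argue by contradiction using a countably closed elementary submodel, exploiting that $X_M$ is a closed subspace of $X$ of small weight. Suppose $X$ is not Alster and fix a witnessing $G_\delta$ cover $\mc G$ of $X$ which covers each compact set finitely but has no countable subcover; replacing $\mc G$ by the family of all finite unions of its members (still $G_\delta$'s, still without a countable subcover), I may assume that each compact subset of $X$ lies in a \emph{single} member of $\mc G$. Now choose a countably closed elementary submodel $M\prec H_\theta$ with $2^{\aleph_0}\subseteq M$, $|M|=2^{\aleph_0}$, and $X,\mc G\in M$. By the result of \cite{Junqueira1998} used above, $X_M$ is a closed subspace of $X$, and there the subspace topology coincides with the $X_M$-topology. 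Since $X$ is first countable, $X_M$ is first countable of cardinality $\le 2^{\aleph_0}$, hence of weight $\le 2^{\aleph_0}$. By hypothesis, $X_M$ is therefore Alster.

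I would then feed the restricted family $\mc G_M=\set{G\cap X_M:G\in\mc G\cap M}$ into the Alster property of $X_M$. That $\mc G_M$ covers $X_M$ is immediate from elementarity: each $x\in X\cap M$ satisfies $M\models\exists G\in\mc G\,(x\in G)$, so some $G\in\mc G\cap M$ contains $x$. Each $G\cap X_M$ is visibly a $G_\delta$ in $X_M$. Granting that $\mc G_M$ covers the compact subsets of $X_M$ finitely (the crux, discussed below), the Alster property of $X_M$ yields a countable $\set{G_n:n<\omega}\subseteq\mc G\cap M$ with $\set{G_n\cap X_M:n<\omega}$ covering $X_M$. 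Since $M$ is countably closed, $\set{G_n:n<\omega}\in M$. If this family failed to cover $X$, then $M\models\exists x\in X\,\forall n\,(x\notin G_n)$ by elementarity, placing a point of $X\smallsetminus\bigcup_n G_n$ inside $M$, i.e.\ inside $X_M$ --- contradicting that the $G_n$ cover $X_M$. Hence $\set{G_n:n<\omega}$ is a countable subcover of $\mc G$, contradicting the choice of $\mc G$, and $X$ is Alster.

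The main obstacle is verifying that $\mc G_M$ covers compacta finitely, i.e.\ that every compact $K\subseteq X_M$ lies in a single member of $\mc G\cap M$. This reduces to a capturing lemma: \emph{every compact subset of $X_M$ is contained in a compact set belonging to $M$.} Indeed, fix in $V$ a selector $c$ assigning to each compact subset of $X$ a member of $\mc G$ containing it; by elementarity we may take $c\in M$. If $K\subseteq K^{*}$ with $K^{*}\in M$ compact, then $c(K^{*})\in\mc G\cap M$ and $K\subseteq K^{*}\subseteq c(K^{*})$, as required. Establishing the capturing lemma is where first countability must be used in earnest: using a neighbourhood-base assignment lying in $M$, one first traps $K$ inside an open set $V\in M$ (cover $K$ by first-coordinate base neighbourhoods of its points drawn from $M$ and extract a finite subcover, whose union lies in $M$), and the delicate remaining work is to promote this to containment in a compact \emph{element} of $M$. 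This promotion is the heart of the argument, and it is precisely here that first countability (together with the cardinality bound $2^{\aleph_0}$ on $X_M$ and the countable closedness of $M$) is essential, in contrast to the general form of Alster's question, where no such capturing is available.
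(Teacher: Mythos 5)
Your argument has a genuine, and self-acknowledged, gap at its core: the ``capturing lemma'' asserting that every compact subset of $X_M$ is contained in a compact \emph{element} of $M$. You correctly identify that without it you cannot verify that $\mc{G}_M$ covers compacta of $X_M$ finitely, which is exactly the hypothesis you must check before invoking the Alster property of $X_M$; but you never prove it, only gesture at ``promoting'' containment in an open set $V\in M$ to containment in a compact member of $M$. That promotion is not routine: first countability and countable closure of $M$ give you control over countable (e.g.\ convergent-sequence) compacta, which land in $M$ outright, but an uncountable compact set all of whose points happen to lie in $M$ need not be covered by any compact set in $M$, and trapping $K$ in an open $V\in M$ says nothing about compact supersets. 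As written, the proof does not go through; the downward reflection of a non-Alster witness is precisely the hard direction.

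The paper sidesteps this entirely with a reduction you missed: it suffices to prove $X$ is \emph{Lindel\"of}. Indeed, a first countable $T_2$ Lindel\"of space has cardinality at most $2^{\aleph_0}$ (Arhangel'ski\u{\i}) and hence, being first countable, weight at most $2^{\aleph_0}$; so once $X$ is Lindel\"of it is itself a closed subspace of $X$ of weight $\leq 2^{\aleph_0}$ and the hypothesis applies to $X$ directly --- no transfer of the Alster property from $X_M$ to $X$ is ever needed. To get Lindel\"ofness one takes a countably closed $M$ of size $2^{\aleph_0}$ containing $X$ and its topology: by \cite{Junqueira1998}, $X_M$ is a closed subspace of $X$, it has weight $\leq 2^{\aleph_0}$, so it is Alster and hence Lindel\"of by hypothesis, and then the reflection theorem of \cite{Junqueira1998} (Lindel\"ofness of $X_M$ implies Lindel\"ofness of $X$ in this setting) finishes the proof. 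I recommend you rebuild your argument along these lines: reflect only the cover/Lindel\"of structure upward, rather than trying to reflect the full Alster witness downward.
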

\begin{proof}
Since first countable $T_2$ Lindel\"of spaces have cardinality no more than that of the continuum, it suffices to show $X$ is Lindel\"of. Take a countably closed elementary submodel $M$ of some sufficiently large $H_\theta$ which contains $X$ and its topology. Then $X \cap M$ ($= X_M$) is a closed subspace of $X$ \cite{Junqueira1998}. Therefore $X \cap M$ is Alster and hence Lindel\"of. But then by \cite{Junqueira1998}, $X$ is Lindel\"of.
\end{proof}

Similar arguments clearly work if we replace ``Alster'' by ``$\sigma$-compact'' or other strengthenings of ``Lindel\"of'' in the statement of the Theorem.

\begin{cor}
$CH$ implies that if $X$ is first countable $T_3$ and each closed subspace of size $\leq \aleph_1$ is productively $FC$-Lindel\"of, then $X$ is Alster.
\end{cor}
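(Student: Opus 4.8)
The plan is to re-run the elementary submodel argument of the preceding Theorem, feeding the hypothesis through the earlier Theorem that, under $CH$, a first countable $T_3$ productively $FC$-Lindel\"of space is Alster. Under $CH$ we have $2^{\aleph_0} = \aleph_1$, so ``size $\leq \aleph_1$'' and ``size $\leq 2^{\aleph_0}$'' coincide, and a first countable $T_2$ Lindel\"of space has cardinality $\leq 2^{\aleph_0} = \aleph_1$.

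First I would reduce everything to showing that $X$ is Lindel\"of. Once $X$ is Lindel\"of it has size $\leq \aleph_1$, so $X$ is a closed subspace of itself of size $\leq \aleph_1$ and is therefore productively $FC$-Lindel\"of by hypothesis; being also first countable $T_3$, the earlier Theorem makes $X$ Alster, as desired.

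To obtain Lindel\"ofness, I would take a countably closed elementary submodel $M \prec H_\theta$ (with $\theta$ a sufficiently large regular cardinal) with $X$ and its topology in $M$ and $|M| = \aleph_1$. By \cite{Junqueira1998}, since $X$ is first countable $T_2$, the space $X_M = X \cap M$ is a closed subspace of $X$, and $|X_M| \leq |M| = \aleph_1$. By hypothesis $X_M$ is productively $FC$-Lindel\"of; note this already forces $X_M$ to be Lindel\"of, e.g.\ by taking the product with a one-point space. Since $X_M$ is first countable $T_3$, the earlier Theorem makes it Alster, hence Lindel\"of. Finally, \cite{Junqueira1998} gives that $X$ is Lindel\"of whenever $X_M$ is, completing the reduction.

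This is essentially the argument already flagged by the remark that the submodel technique works for strengthenings of ``Lindel\"of'', so I expect no serious obstacle. The one point requiring care is the bookkeeping between ``size'' and ``weight'': the preceding Theorem is stated for closed subspaces of weight $\leq 2^{\aleph_0}$, but here I must exploit that the reflected subspace $X_M$ has \emph{size}---not merely weight---at most $|M| = \aleph_1$, which is exactly what lets the hypothesis on closed subspaces of size $\leq \aleph_1$ apply. Quantifying over weight rather than size would be circular, since bounding $|C|$ by $2^{\aleph_0}$ for a closed subspace $C$ of small weight would itself require knowing that $C$ is Lindel\"of.
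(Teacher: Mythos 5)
Your proposal is correct and is essentially the paper's own argument: the paper leaves this corollary to the preceding elementary submodel Theorem together with the remark that the same argument works for other strengthenings of ``Lindel\"of,'' and your re-run of that proof---reflecting to a closed $X_M$ with $|X_M|\leq|M|=\aleph_1$, applying the earlier $CH$ theorem to get $X_M$ Alster hence Lindel\"of, pulling Lindel\"ofness back to $X$ via \cite{Junqueira1998}, and then applying the hypothesis to $X$ itself---is exactly the intended route. Your attention to quantifying the hypothesis over size rather than weight is the right bookkeeping and matches the role of the paper's closing observation about size versus weight in first countable spaces.
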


\begin{cor}
$CH$ implies that if $X$ is metrizable and each closed subspace of $X$ of size $\leq \aleph_1$ is productively $FC$-Lindel\"of, then $X$ is $\sigma$-compact.
\end{cor}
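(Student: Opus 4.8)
The plan is to reduce this corollary to the one immediately preceding it, which states that under $CH$ a first countable $T_3$ space all of whose closed subspaces of size $\leq \aleph_1$ are productively $FC$-Lindel\"of must be Alster. Every metrizable space is first countable and $T_3$ (indeed $T_4$), so a metrizable $X$ satisfying our hypothesis meets the hypotheses of that corollary verbatim. Hence, assuming $CH$, the first step is to conclude directly that $X$ is Alster.

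It then remains only to upgrade ``Alster'' to ``$\sigma$-compact'', and for this I would invoke the final clause of Lemma \ref{lem1}, namely that Alster metrizable spaces are $\sigma$-compact. Since $X$ is both metrizable and (by the previous step) Alster, this yields that $X$ is $\sigma$-compact, which is exactly the desired conclusion. Thus the whole argument is a two-step reduction, and essentially all of the mathematical content is already packaged in Lemma \ref{lem1} together with the two results stated just above.

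The one point to keep an eye on is that metrizability is doing two separate jobs here: it supplies the ``first countable $T_3$'' needed to invoke the preceding corollary, and it is also precisely the hypothesis that lets Lemma \ref{lem1} convert ``Alster'' into ``$\sigma$-compact''. As an alternative to routing through the corollary, one could instead re-run the elementary-submodel argument of the theorem above directly, replacing ``Alster'' by ``$\sigma$-compact'' throughout, as the remark following that theorem permits: take a countably closed elementary submodel $M$ containing $X$ and its topology, observe that $X_M = X \cap M$ is a closed subspace of $X$ of size $\leq |M| = 2^{\aleph_0} = \aleph_1$ (using $CH$), hence productively $FC$-Lindel\"of and therefore $\sigma$-compact, and then apply the reflection results of \cite{Junqueira1998} to recover the conclusion for $X$. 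I expect no genuine obstacle in either route; the reduction to the preceding corollary is the cleanest.
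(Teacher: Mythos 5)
Your proposal is correct and matches the paper's intended argument: the paper's one-line proof merely notes that first countable spaces of size $\leq 2^{\aleph_0}$ have weight $\leq 2^{\aleph_0}$, the rest being exactly your two-step reduction (the preceding corollary gives Alster, and Lemma \ref{lem1} converts Alster metrizable to $\sigma$-compact), with your alternative of substituting ``$\sigma$-compact'' for ``Alster'' in the elementary submodel theorem being precisely what the paper's remark after that theorem licenses.
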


\begin{proof}
In a first countable space, every subspace of size $\leq 2^{\aleph_0}$ has weight $\leq 2^{\aleph_0}$.
\end{proof}

\section{Other forcings}

We can also consider preservation of Lindel\"of and $D$ by other kinds of forcing.  For example, it is known that a space is Lindel\"of in a Cohen or random real extension if and only if it is in the ground model~\cite{Dow1990}, \cite{Grunberg1998}, \cite{Scheepers}, and \cite{TallProblems}.  The situation for $D$ is more complicated; in \cite{AurJunLar} it is shown that a Lindel\"of space $X$ becomes a $D$-space in an extension by more than $|X|$ Cohen reals.  It follows immediately from Lemma \ref{lem2} and \cite{Scheepers} that this can be improved to:

\begin{thm}
Adding $\aleph_1$ Cohen reals makes a Lindel\"of space $D$.
\end{thm}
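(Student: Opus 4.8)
The plan is to chain together the two inputs named in the remark preceding the statement. From \cite{Scheepers} I would take the fact that adding $\aleph_1$ Cohen reals makes any Lindel\"of space Menger, and from Lemma \ref{lem2} the fact that every Menger space is $D$. The entire argument is then a composition of these two citations, which is precisely why the text describes the implication as immediate.

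First I would pass to the forcing extension obtained by adding $\aleph_1$ Cohen reals. Let $X$ be the given ground-model Lindel\"of space. By the cited result of \cite{Scheepers}, $X$ is Menger in this extension. The only point requiring care here is that the Scheepers result is to be applied to a ground-model Lindel\"of space as viewed in the Cohen extension; this is exactly its content, and it is what makes the numerical improvement possible.

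Second, working inside the extension, I would apply Lemma \ref{lem2}: since $X$ is Menger there, it is a $D$-space there. That is the desired conclusion, and there is nothing further to prove.

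There is no genuine obstacle, since both ingredients are quoted results that compose directly. The one conceptual point worth noting is where the improvement over \cite{AurJunLar} comes from. The earlier bound required more than $|X|$ Cohen reals to produce $D$-ness directly, whereas routing through the Menger property demands only as many Cohen reals as the Scheepers theorem needs, namely $\aleph_1$, independent of $|X|$. I would also observe that if the relevant result of \cite{Scheepers} were phrased in terms of Rothberger rather than Menger, the argument would be unaffected, since every Rothberger space is Menger and hence, by Lemma \ref{lem2}, a $D$-space.
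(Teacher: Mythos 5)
Your proposal is correct and matches the paper's argument: the paper also composes the Scheepers result with Lemma \ref{lem2}, noting that adding $\aleph_1$ Cohen reals makes the space Rothberger, hence Menger, hence $D$. Your closing remark about the Rothberger versus Menger phrasing is exactly the form the paper uses, so there is no substantive difference.
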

\begin{proof}
By \cite{Scheepers} that makes the space Rothberger and hence Menger; by Lemma \ref{lem2} it is hence $D$.
\end{proof}

%

We do not know the answer to the following:

\begin{pbm}[ 6]
Suppose $X$ is Lindel\"of in the ground model and $D$ in a random real extension.  Must $X$ be $D$ in the ground model?
\end{pbm}

We conjecture ``yes'', at least for 0-dimensional $X$.  In~\cite{Scheepers}, it is shown that if a space is Menger in a random extension, then it is Menger in the ground model.

\section{Examples and implications}

Figure \ref{thediagram} illustrates the relationships among the properties we have discussed. For convenience, we assume $T_3$ throughout the diagram and examples.  Zdomskyy~\cite{Zdomskyy2005} proved that if $\mathfrak{u} < \mathfrak{g}$, then Rothberger spaces are Hurewicz.  An easier proof is in \cite{Tsaban2008}.  Tall \cite{TallNote} proved this from Borel's Conjecture. Moore's $L$-space \cite{Moore2006} is Rothberger and Hurewicz \cite{Scheepers}, but is not productively Lindel\"of.  To see this, we note that Tsaban and Zdomskyy \cite{TZ} have shown that there is an $n \in \omega$ such that $L^n$ is not Lindel\"{o}f, so $L$ is not powerfully Lindel\"{o}f.  Let $n_0$ be the least such $n$.  Then $L^{n_0 - 1}$ is Lindel\"{o}f, but $L \times L^{n_0 - 1}$ is not.  We do not know whether, as claimed in \cite{Scheepers}, $L^2$ is not Lindel\"{o}f.  In an earlier version of this note, we asked whether indestructibly productively Lindelof spaces are D. In fact, they are Hurewicz \cite{TallNote}.  Also in \cite{TallNote} we show that indestructibly productively Lindel\"of spaces are powerfully Lindel\"of.  A number next to a solid arrow means that the example with that number from the list below shows that the arrow does not reverse.  A number next to a broken dashed arrow means that that example shows that the implication does not hold.  A dotted arrow indicates that the implication holds under the indicated hypothesis.

The numbers refer to the following examples:
\begin{enumerate}
\item Moore's $L$-space \cite{Moore2006}.
\item $[0,1]$.
\item The space $\mb{P}$ of irrationals.
\item $2^{\omega_1}$.
\item A Hurewicz (and hence Menger) subspace of the real line which is not $\sigma$-compact (and hence not Alster) \cite{Just1996}, \cite{Tsaban2008}, \cite{Ts}.
\item Michael's space \cite{Michael1971}.
\item The one-point Lindel\"ofication of the discrete space of size $\aleph_1$.
\item A Bernstein set \cite{Michael1971}.
\item Przymusi\'nski's space \cite{Przymusinski1980}.
\item Another example in \cite{Przymusinski1980}.
\item The \textit{Sorgenfrey line} is well-known to be Lindel\"of, have closed sets $G_\delta$, and to have non-Lindel\"of square; on the other hand, the product of a Lindel\"of space with closed sets $G_\delta$ with a separable Lindel\"of space (such as $\mb{P}$) is Lindel\"of \cite{Przymusinski1984}.
\item A Menger subspace of the real line which is not Hurewicz \cite{ChaberPol}, \cite{Tsaban2008}, \cite{Ts}.
\item The subspace of the Michael line obtained from a set concentrated on the rationals.
\end{enumerate}

\noindent Some of the most interesting problems from the diagram are:
\begin{enumerate}[A.]
\item Is there a ZFC example of a Lindel\"of space whose product with $\mb{P}$ is not Lindel\"of?
\item Is there a productively Lindel\"of space which is not powerfully Lindel\"of?
\item Is there a productively Lindel\"of space which is not Alster?
\end{enumerate}

\begin{landscape}
\begin{figure*}[h!]
\centering

\psmatrix[mcol=c,nodealign=true]
            & Lindel\"of $P$ & & $\sigma$-compact\\
\hspace{-50pt}Rothberger  &                           & \begin{tabular}{c}indestructibly\\ productively\\ Lindel\"of\end{tabular}\\[30pt]
            & \begin{tabular}{c}indestructibly\\ Lindel\"of\end{tabular} & Hurewicz & Alster\\[36pt]
            & Menger                    & & \begin{tabular}{c}productively\\ Lindel\"of \end{tabular} & \begin{tabular}{c}productively\\$FC$-Lindel\"of\end{tabular}&\begin{tabular}{c}powerfully\\ Lindel\"of\end{tabular}\\
            & Lindel\"of $D$            & Lindel\"of & $\times\mathbb{P}$ Lindel\"of & $X^2$ Lindel\"of & \begin{tabular}{c}finitely\\ powerfully\\ Lindel\"of\end{tabular}
\endpsmatrix
\psset{arrows=->,labelsep=3pt,nodesep=3pt}

\ncarc[arcangle=35,linestyle=dashed]{1,2}{1,4}\naput{7}\ncput[nrot=:90,npos=0.6]{\psline[xunit=0.25cm,arrows=-](-1,0)(1,0)}
\ncline{1,2}{2,1}\naput{1}
\ncline{1,2}{3,2}\trput[tpos=0.4]{3}\tlput[tpos=0.4]{BC}
\ncline{1,2}{2,3}\naput{2}
\ncarc[arcangle=-22,linestyle=dashed]{1,4}{1,2}\naput{2}\ncput[nrot=:90,npos=0.6]{\psline[xunit=0.25cm,arrows=-](-1,0)(1,0)}
\ncarc[arcangle=-12]{1,4}{3,4}\tlput{7}
\ncangle[arrows=<->,linestyle=dotted,angleA=180,angleB=90,linearc=1]{1,4}{2,3}\nbput[npos=0.4]{\small metrizable}
\ncarc[linestyle=dashed]{1,4}{2,3}\naput{4}\ncput[nrot=:90,npos=0.6]{\psline[xunit=0.25cm,arrows=-](-1,0)(1,0)}
\ncline{1,4}{3,3}\nbput[npos=0.4]{5}
\ncline{2,3}{4,6}\naput[npos=0.8]{4}
\ncline{2,3}{3,3}\naput[npos=0.3]{1}
\ncline{2,1}{3,2}\nbput[npos=0.7]{3}
\ncline[linestyle=dotted,linewidth=1pt]{2,1}{3,3}\nbput[npos=0.3,nrot=:0]{\small $\mathfrak{u}<\mathfrak{g}$}
\ncline[linestyle=dashed]{2,1}{3,4}\trput{1}\ncput[nrot=:90,npos=0.6]{\psline[xunit=0.25cm,arrows=-](-1,0)(1,0)}
\ncline{2,1}{4,2}\tlput{2}
\ncline{2,3}{4,4}\naput[npos=0.27]{4}
\ncangle[linestyle=dashed,linearc=0.6,angleA=270,angleB=270,arm=0.7cm]{2,1}{5,4}\nbput[npos=0.57]{13 $(CH)$}\ncput[nrot=:90,npos=0.6]{\psline[xunit=0.25cm,arrows=-](-1,0)(1,0)}

\ncangles[linestyle=dashed,linearc=0.6,angleA=90,angleB=0,armA=5cm, armB=1cm]{2,1}{5,6}\nbput[npos=0.47]{1}
\ncput[nrot=:90,npos=0.6]{\psline[xunit=0.25cm,arrows=-](-1,0)(1,0)}

\ncarc[linestyle=dashed,arcangle=12]{2,3}{1,4}\naput{7}\ncput[nrot=:90,npos=0.6]{\psline[xunit=0.25cm,arrows=-](-1,0)(1,0)}
\ncline{2,3}{3,2}\nbput[npos=0.2]{8}
\ncline[linestyle=dashed]{3,2}{3,3}\nbput[npos=0.6]{4}\ncput[nrot=:90,npos=0.4]{\psline[xunit=0.25cm,arrows=-](-1,0)(1,0)}
\ncarc[arcangle=12,linestyle=dashed]{3,2}{4,2}\tlput{4~~~}\ncput[nrot=:90,npos=0.3]{\psline[xunit=0.25cm,arrows=-](-1,0)(1,0)}
\ncarc[arcangle=12,linestyle=dashed]{3,2}{4,4}\taput{8}\ncput[nrot=:90]{\psline[xunit=0.25cm,arrows=-](-1,0)(1,0)}
\ncline{3,3}{4,2}\tlput{12}
\ncline[linestyle=dashed]{3,3}{4,4}\trput{1}\ncput[nrot=:90,npos=0.4]{\psline[xunit=0.25cm,arrows=-](-1,0)(1,0)}
\ncarc[arcangle=-12,linestyle=dotted,linewidth=1pt]{3,4}{1,4}\trput{\begin{tabular}{l}\small compact\\\small sets $G_\delta$\end{tabular}}
\ncarc[arcangle=-12,linestyle=dashed]{3,4}{3,2}\nbput[npos=0.68]{4}\ncput[nrot=:90,npos=0.6]{\psline[xunit=0.25cm,arrows=-](-1,0)(1,0)}
\ncline{3,4}{3,3}\naput[npos=0.75]{5}
\ncline{3,4}{4,2}\naput[npos=0.8]{5}
\ncarc[arcangle=-12]{3,4}{4,4}
\ncangle[angleB=90,linearc=1]{3,4}{4,6}\taput{3}
\ncarc[arcangle=12,linestyle=dashed]{4,2}{3,2}\trput{~~~3}\ncput[nrot=:90,npos=0.4]{\psline[xunit=0.25cm,arrows=-](-1,0)(1,0)}
\ncline[linestyle=dashed]{4,2}{4,4}\nbput{1}\ncput[nrot=:90,npos=0.4]{\psline[xunit=0.25cm,arrows=-](-1,0)(1,0)}
\ncline{4,2}{5,2}\tlput{3}
\ncline{4,2}{5,3}\naput{3}
\ncarc[arcangle=12,linestyle=dashed]{4,4}{3,2}\tbput{4}\ncput[nrot=:90]{\psline[xunit=0.25cm,arrows=-](-1,0)(1,0)}
\ncarc[arcangle=-12,linestyle=dotted,linewidth=1pt]{4,4}{3,4}\trput{\small~$w\leq\aleph_1~(CH)$}
\ncline{4,4}{4,5}
\ncline{4,4}{5,3}\nbput{1}
\ncline{4,4}{5,4}\nbput{11}
\ncline{4,4}{5,6}
\ncline{4,5}{5,4}\nbput{11}
\ncarc[arcangle=-20,linestyle=dashed]{4,6}{4,4}\taput{8}\ncput[nrot=:90,npos=0.4]{\psline[xunit=0.25cm,arrows=-](-1,0)(1,0)}
\ncline{4,6}{5,4}\nbput[npos=0.3]{11}
\ncline{4,6}{5,5}\naput[npos=0.4]{9}
\ncline{4,6}{5,6}\trput{9}
\ncline{5,2}{5,3}
\ncarc[arcangle=-12,linestyle=dashed]{5,3}{5,4}\tbput{$6~(CH)$}\ncput[nrot=:90]{\psline[xunit=0.25cm,arrows=-](-1,0)(1,0)}
\ncarc[arcangle=-12]{5,4}{5,3}
\ncline[linestyle=dashed]{5,4}{5,5}\naput[npos=0.4]{11}\ncput[nrot=:90,npos=0.6]{\psline[xunit=0.25cm,arrows=-](-1,0)(1,0)}
\ncarc[linestyle=dashed,arcangle=12]{5,6}{5,4}\naput{13 $(CH)$}\ncput[nrot=:90,npos=0.35]{\psline[xunit=0.25cm,arrows=-](-1,0)(1,0)}
\ncline{5,6}{5,5}\taput{10}

\caption{The relationships among various properties discussed.}\label{thediagram}
\end{figure*}
\end{landscape}

\begin{rem}
Since this paper was submitted, there have been several developments worth noting:
\begin{enumerate}
\item{
There is an easy proof that CH implies productively Lindel\"{o}f spaces are Menger \cite{T}.
}
\item{
The completeness requirement in d) $\Rightarrow$ a) of Theorem \ref{thm14} has been removed \cite{RZ}.
}
\item{
Further investigation of the influence of small cardinals on Michael's problem can be found in \cite{AAJT}.
}
\end{enumerate}
\end{rem}

In conclusion, we thank the careful referee and Boaz Tsaban for many helpful suggestions and
for catching several errors.

\nocite{*}
\bibliographystyle{acm}
\bibliography{lsipd}

\noindent
{\rm Leandro F. Aurichi\\
Instituto de Ci\^{e}ncas Matem\'{a}ticas de Computa\c{c}\~{a}o
(ICMC-USP), Universidade De
S\~{a}o Paulo, S\~{a}o Carlos, SP\\
CEP 13566-590 - Brazil\\}
\noindent
{\it e-mail address:} {\rm aurichi@icmc.usp.br}\\
\\
\noindent
{\rm Franklin D. Tall\\
Department of Mathematics\\
University of Toronto\\
Toronto, Ontario\\
M5S 2E4\\
CANADA\\}
\noindent
{\it e-mail address:} {\rm f.tall@utoronto.ca}

\end{document}